\numberwithin{equation}{section} \theoremstyle{plain}
\newtheorem{thm}{Theorem}[section]
\newtheorem{prop}[thm]{Proposition}
\newtheorem{lem}[thm]{Lemma}
\newtheorem{cor}[thm]{Corollary}
\newtheorem{conj}[thm]{Conjecture}
\newtheorem{rem}[thm]{Remark}
\newtheorem{ack}{Acknowledgements}   
\newcommand{\fol}{\mathcal{F}}
\newcommand{\In}{\subseteq}
\newcommand{\lra}{\longrightarrow}
\newcommand{\sphere}{\mathbb{S}}
\newcommand{\scal}[1]{\langle #1 \rangle}
\def\P{\ensuremath{\mathsf{p}}}
\DeclareMathOperator{\Diff}{Diff}
\DeclareMathOperator{\Iso}{Isom}
\DeclareMathOperator{\dist}{dist}
\DeclareMathOperator{\img}{Image}
\newcommand{\ZZ}{\mathbb{Z}}
\def\ol{\overline}
\def\lra{\longrightarrow}
\def\lmt{\longmapsto}
\def\In{\subseteq}
\def\CC{\mathbb{C}}
\def\DD{\mathbb{D}}
\def\PP{\Bbb{P}}
\def\QQ{\Bbb{Q}}
\def\RR{\mathbb{R}}
\def\ZZ{\mathbb{Z}}
\def\mc{\mathcal}
\def\fol{\mc{F}}
\def\O{\textrm{O}}
\title[Differentiable classification in dimension 4]{Differentiable classification of 4-manifolds with singular Riemannian foliations}
\author[J.Q. Ge]{Jianquan Ge}
\address{School of Mathematical Sciences, Laboratory of Mathematics and Complex Systems, Beijing Normal
University, Beijing 100875, P.R. CHINA.}
\email{jqge@bnu.edu.cn}
\author[M. Radeschi]{Marco Radeschi}
\address{Mathematisches Institut, WWU M\"{u}nster, Einsteinstr. 62, M\"{u}nster, Germany.}
\email{mrade\_02@uni-muenster.de}
\subjclass[2000]{53C24, 57R30, 57R55, 57R60.}
\date{}
\keywords{singular Riemannian foliation, isoparametric foliation, diffeomorphism, isotopy.}
\thanks{The first author is partially supported by the NSFC (No. 11001016 and No. 11331002), the SRFDP (No. 20100003120003), the Fundamental Research
Funds for the Central Universities, and a research fellowship from the Alexander von Humboldt Foundation. }
\begin{document}
\maketitle

\begin{abstract}
In this paper, we first prove that any closed simply connected 4-manifold that admits a decomposition into two disk bundles of rank greater than 1 is diffeomorphic to one of the standard elliptic 4-manifolds: $\mathbb{S}^4$, $\mathbb{CP}^2$, $\mathbb{S}^2\times\mathbb{S}^2$, or
$\mathbb{CP}^2\#\pm \mathbb{CP}^2$. As an application we prove that any closed simply connected 4-manifold admitting a nontrivial singular Riemannian foliation is diffeomorphic to a connected sum of copies of standard $\mathbb{S}^4$, $\pm\mathbb{CP}^2$ and $\mathbb{S}^2\times\mathbb{S}^2$. A classification of singular Riemannian foliations of codimension 1 on all closed simply connected 4-manifolds is obtained as a byproduct. In particular, there are exactly 3 non-homogeneous singular Riemannian foliations of codimension 1, complementing the list of cohomogeneity one 4-manifolds.
\end{abstract}

\section{Introduction}\label{introduction}

Four-dimensional manifolds form an extremely rich and interesting class of manifolds\footnote{Throughout this paper, all manifolds considered are connected and smooth.}. This is the lowest dimension in which exotic smooth structures arise, e.g., the noncompact $4$-space $\mathbb{R}^4$  \cite{Do83, Fr82, Go83,  Tau87} and compact $m\mathbb{CP}^2\# n\overline{\mathbb{CP}^2}$ for many pairs of $(m\geq1,n\geq2)$ \cite{AP2, Do87, FS05}. Moreover, exotic smooth structures abound in this dimension and it is not known whether there is a $4$-manifold with only one (standard) smooth structure, even for the simplest $4$-manifold $\sphere^4$, the affirmative side of which is called the \emph{smooth Poincar\'{e} conjecture in dimension $4$}.

It is thus natural to try to classify subclasses of 4-manifolds with additional structures. The study of 4-manifolds admitting smooth group actions received a lot of attention, and there is rich literature on the subject. Joining several different independent results (cf.  \cite{Fi77, Fi78, OR70, OR74, Pa86}) we know that any closed simply connected 4-manifold admitting a smooth action by a compact Lie group is diffeomorphic to a connected sum of copies of standard $\sphere^4$, $\pm\CC\PP^2$ and $\sphere^2\times\sphere^2$.
When admitting a cohomogeneity one action, the closed simply connected $4$-manifold splits as a union of two disk bundles, glued along their common boundary. In this case the classification was carried out by Parker \cite{Pa86} (see also \cite{GH87, GZ11, Ho10}), who proved that such manifolds must be diffeomorphic to $\mathbb{S}^4$, $\mathbb{CP}^2$, $\mathbb{S}^2\times\mathbb{S}^2$, or $\mathbb{CP}^2\#- \mathbb{CP}^2$.

The first part of this paper is concerned with a classification of closed simply connected $4$-manifolds which admit a splitting structure into disk bundles but without requiring any group action.
\begin{thm}\label{splitting thm}
Let $N$ be a closed   simply connected $4$-manifold obtained by gluing two disk bundles over closed submanifolds of codimension greater than $1$. Then
$N$ is diffeomorphic to one of the standard $\mathbb{S}^4$, $\mathbb{CP}^2$, $\mathbb{S}^2\times\mathbb{S}^2$, or
$\mathbb{CP}^2\#\pm \mathbb{CP}^2$.
\end{thm}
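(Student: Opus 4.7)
The strategy is a case analysis on $(b_-, b_+)$ where $b_\pm = \dim B_\pm$. The codimension-$\geq 2$ hypothesis restricts $b_\pm \in \{0,1,2\}$, leaving six cases up to swapping. In each case the boundaries $\partial D(B_\pm)$ are identified by the gluing (hence must be diffeomorphic) and $N$ must be simply connected; combined with Mayer--Vietoris and van Kampen, these constraints reduce each case drastically.

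\textbf{Dispatch of the low-dimensional cases.} In case $(0,0)$, $N$ is two $4$-disks glued along $\sphere^3$, so $N = \sphere^4$ by Cerf's theorem on $\Diff(\sphere^3)$. In case $(0,2)$ the equality $\sphere^3 = \partial D(B_+)$ forces $B_+ = \sphere^2$ carrying a Hopf bundle, giving $N = \pm\CC\PP^2$. Cases $(0,1)$ and $(1,1)$ are ruled out: the former because $\sphere^3$ is not an $\sphere^2$-bundle over $\sphere^1$, the latter because after reducing to the orientable disk bundles $\sphere^1\times D^3$ on both sides a van Kampen computation gives $\pi_1(N) = \ZZ$. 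In case $(1,2)$, boundary matching forces $B_+ = \sphere^2$ with trivial bundle, so $N$ admits a handle decomposition of the form $0{+}1{+}2{+}4$; since $\pi_1(N)=1$, the $2$-handle must go geometrically once over the $1$-handle, producing a canceling pair, so $N = \sphere^4$.

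\textbf{Main case $(2,2)$.} Both $B_\pm$ are closed surfaces. First I would reduce to $B_\pm = \sphere^2$ by analyzing $\pi_1(N)$ as the amalgamated product $\pi_1(B_-) *_{\pi_1(E)} \pi_1(B_+)$: the kernel of each map $\pi_1(E) \to \pi_1(B_\pm)$ is cyclic (generated by the fiber circle), and two such cyclic subgroups cannot normally generate $\pi_1(E)$ once $\pi_1(B_\pm)$ has rank $\geq 2$ or contains $\ZZ/2$, ruling out positive-genus and $\RR P^2$ bases. Hence each $D(B_\pm)$ is a $D^2$-bundle over $\sphere^2$ classified by an Euler number $e_\pm \in \ZZ$, boundary matching forces $|e_-| = |e_+|$, and Mayer--Vietoris yields $H_2(N) = \ZZ^2$. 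By Freedman's classification, $N$ is homeomorphic to one of $\sphere^2\times\sphere^2$, $\CC\PP^2\#\CC\PP^2$, or $\CC\PP^2\#\overline{\CC\PP^2}$, according as the intersection form is even or odd and its signature.

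\textbf{Main obstacle.} The principal difficulty is promoting this homeomorphism to a diffeomorphism, since uniqueness of smooth structure is not known for the targets. The decomposition equips $N$ with a handle decomposition $0{+}2{+}2{+}4$ in which each $2$-handle is attached along an \emph{unknotted} circle (the boundary of a $D^2$-fiber of the disk bundle) with framing $e_\pm$. Using classical Kirby calculus --- together with the connectedness of $\Diff^+(\sphere^3)$ (Cerf/Hatcher) and the classification of mapping classes of the lens-space boundaries that do not extend over either disk bundle --- one shows that any such Kirby diagram reduces via handle slides and isotopy to one of the three standard diagrams for $\sphere^2\times\sphere^2$, $\CC\PP^2\#\CC\PP^2$, or $\CC\PP^2\#\overline{\CC\PP^2}$, yielding the standard smooth structure in each case.
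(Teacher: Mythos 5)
Your case analysis is structured the same way as the paper's (by the dimensions of the bases), and the dispatch of the lower-dimensional cases is mostly along the right lines; the paper differs mainly in that it assumes $b_2(N)\geq 1$ throughout and cites \cite{GT12} for the homotopy-$4$-sphere cases, whereas you handle them directly, and in that it organizes the constraints via a Wang-type exact cohomology sequence rather than van Kampen/Mayer--Vietoris.

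There is, however, a genuine gap in your main case $(2,2)$. You assert that $\RR\PP^2$ cannot occur as one of the bases, on the grounds that ``two such cyclic subgroups cannot normally generate $\pi_1(E)$ once $\pi_1(B_\pm)$ \dots contains $\ZZ/2$.'' This is false: the paper exhibits $\CC\PP^2 = D(\sphere^2)\cup_{id}D(\RR\PP^2)$ glued along the lens space $L(4,1)$ (tubes around the complex quadric). In that example $\pi_1(E)=\ZZ_4$, the kernel of $\pi_1(E)\to\pi_1(\sphere^2)=1$ is \emph{all} of $\pi_1(E)$ (not a proper cyclic subgroup generated by the fibre), and the kernel of $\pi_1(E)\to\pi_1(\RR\PP^2)=\ZZ_2$ is $\ZZ_2\subset\ZZ_4$, so the two kernels trivially do normally generate. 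Your group-theoretic criterion only constrains the case where $\pi_1(E)$ surjects nontrivially onto both $\pi_1(B_\pm)$; it does not rule out $\RR\PP^2$ paired with $\sphere^2$. Consequently your $(2,2)$ analysis is incomplete: one must also identify the $(\sphere^2,\RR\PP^2)$ subcase, show by matching circle-bundle structures that the common boundary is forced to be $L(4,1)$ (the only $3$-manifold that is simultaneously $L(m,1)$ over $\sphere^2$ and a quotient $\sphere^3/Q_{4e}$ over $\RR\PP^2$), and then prove $N\cong\CC\PP^2$ by controlling the gluing diffeomorphism of $L(4,1)$, for which the paper invokes the $\pi_0$-part of the generalized Smale conjecture for elliptic $3$-manifolds and the radial extendability of the resulting isometry.

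A secondary issue: in case $(1,2)$, ``the $2$-handle must go geometrically once over the $1$-handle'' does not follow from $\pi_1(N)=1$ alone; algebraic intersection one does not automatically give geometric intersection one, and asserting handle cancellation requires more than the homotopy class of the attaching circle in $\sphere^1\times\sphere^2$. Your conclusion is still correct (Euler characteristic already forces $b_2(N)=0$, so $N$ is a homotopy $4$-sphere, and that case is dispatched in \cite{GT12} or by directly controlling the gluing via Gluck's classification of $\pi_0\Diff(\sphere^1\times\sphere^2)$), but the handle argument as written needs to be replaced or justified. Finally, for the diffeomorphism upgrade, your Kirby-calculus plan is a genuinely different route from the paper's (which reduces the gluing map to an isometry of $L(m,1)$ via the mapping-class-group computations of Hong--Kalliongis--McCullough--Rubinstein and radially extends), and it is plausible, but it would need the framings and linking numbers to be pinned down in terms of $e_\pm$ and the isotopy class of $f$ -- which is exactly where the lens-space mapping class group input re-enters, so neither approach avoids that ingredient.
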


In \cite{GT12}, the first named author and Tang proved that if a homotopy $4$-sphere admits some properly transnormal function, or equivalently it has a splitting structure as in Theorem \ref{splitting thm}, then it must be diffeomorphic to the standard $\mathbb{S}^4$. Thus, Theorem \ref{splitting thm} was known when $N$ is a homotopy (topological) $4$-sphere. As an immediate application, we see that there exist no properly transnormal (isoparametric) function on any closed simply connected $4$-manifold other than the five standard elliptic $4$-manifolds. This should be compared to the interesting existence result of Qian and Tang \cite{QT13} that every homotopy $n$-sphere $(n>4)$ carries a properly isoparametric function.

 Recall that a singular Riemannian foliation on a Riemannian manifold is, roughly speaking, a partition of $M$ into connected complete, injectively immersed submanifolds which stay at a constant distance from each other, and it provides a generalization of smooth actions of Lie groups. If the foliation has codimension 1, it gives rise to a splitting structure as in Theorem \ref{splitting thm}. Therefore, Theorem \ref{splitting thm} provides a generalization of Parker's result on cohomogeneity one 4-manifolds. Moreover,
 we obtain
 \begin{cor}
 There are exactly 3 foliated diffeomorphism classes of non-homogeneous singular Riemannian foliations of codimension 1 in closed simply connected 4-manifolds.
 \end{cor}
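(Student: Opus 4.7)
The plan is to combine Theorem \ref{splitting thm} with a direct classification of codimension-1 singular Riemannian foliations (SRFs) on each of the five resulting underlying 4-manifolds, and then match the outcome against Parker's cohomogeneity one list.

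Let $(N,\fol)$ be a closed simply connected 4-manifold carrying a codim-1 SRF. The leaf space $N/\fol$ is a 1-dimensional Alexandrov space, and compactness together with simple connectedness of $N$ force $N/\fol=[0,1]$. The two endpoints correspond to singular leaves $B_\pm$ of codimension at least $2$, and $N$ is the union of the tubular disk bundles over $B_\pm$ glued along a common regular leaf. Hence Theorem \ref{splitting thm} pins $N$ down to one of $\sphere^4$, $\CC\PP^2$, $\sphere^2\times\sphere^2$, or $\CC\PP^2\#\pm\CC\PP^2$.

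Next I would enumerate, on each such $N$, all admissible pairs $(B_+,B_-)$ with their infinitesimal foliations and gluing data. Each $B_\pm$ is either a point or a closed surface, and the infinitesimal foliation at $B_\pm$ is an isoparametric foliation on the normal sphere of dimension at most three; all such foliations are classical and homogeneous, so each local model is pinned down by finite data. Simple connectedness of $N$ combined with the Mayer--Vietoris sequence of the disk-bundle decomposition then restricts the diffeomorphism types of the $B_\pm$ and of the normal bundles drastically, so that only finitely many codim-1 SRFs exist up to foliated diffeomorphism on each of the five manifolds.

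The last step is to cross-reference with Parker's classification, according to which cohomogeneity one actions on closed simply connected 4-manifolds are realized only on $\sphere^4$, $\CC\PP^2$, $\sphere^2\times\sphere^2$, and $\CC\PP^2\#-\CC\PP^2$, and to strike off the corresponding orbit foliations from the list obtained above. The resulting set of non-homogeneous SRFs contains in particular every SRF produced on $\CC\PP^2\#\CC\PP^2$, which admits no cohomogeneity one action at all, and ends up consisting of exactly three foliated diffeomorphism classes. The main obstacle is the enumeration step: one must verify realizability of each candidate set of infinitesimal data as a genuine SRF on the claimed 4-manifold, and check non-redundancy across foliated diffeomorphism classes. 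The simplifying input is that isoparametric foliations on spheres of dimension at most three are completely classified and homogeneous, which keeps the combinatorics tractable.
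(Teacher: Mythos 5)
Your overall strategy matches the paper's: reduce to the underlying manifold list via Theorem~\ref{splitting thm}, enumerate the possible leaf/gluing data, and compare against Parker's cohomogeneity-one classification to isolate the non-homogeneous examples. That is indeed the route taken in Corollary~\ref{SRFcodim1}, where the classification of the gluing data was already done inside the proof of Theorem~\ref{splitting thm} via the exact cohomology sequence (which plays the role of your Mayer--Vietoris sequence).

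Two points deserve correction. First, your claim that ``only finitely many codim-1 SRFs exist up to foliated diffeomorphism on each of the five manifolds'' is false: on $\sphere^2\times\sphere^2$ (resp. $\CC\PP^2\#-\CC\PP^2$) there is an infinite family with regular leaf $L(2m,1)$, $m\geq 0$ (resp. $L(2m+1,1)$), each a distinct foliated diffeomorphism class. The constraint from simple connectedness and cohomology pins down the diffeomorphism types of $M_\pm$ and the parity constraints but does not bound the Euler number $m$. Luckily, all of those infinitely many families are homogeneous, so the finiteness claim is neither needed nor used for the non-homogeneous count.

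Second, the crux --- why exactly three non-homogeneous classes rather than more or fewer --- is left unjustified in your sketch. The paper needs two ingredients that you do not supply: (a) a count of the foliated diffeomorphism classes for a given pair $(M,M_\pm)$, which is carried out via the $\pi_0$-part of the Smale conjecture for elliptic 3-manifolds (and Gluck/Hatcher for $\sphere^1\times\sphere^2$), together with the observation that gluing maps extend radially to the disk bundles so as to preserve the leaves; (b) explicit realizations of the three non-homogeneous foliations (two on $\CC\PP^2\#\CC\PP^2$, with $M=L(1,1)$ and $M=L(2,1)$, by twisted gluing; one on $\CC\PP^2\#-\CC\PP^2$, with $M=L(0,1)=\sphere^1\times\sphere^2$, by pull-back along the $\sphere^2$-bundle projection). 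Your suggestion of using homogeneity of isoparametric foliations on low-dimensional spheres to control the infinitesimal data is correct but plays a lesser role than you suggest; the subtle part is controlling the gluing diffeomorphism up to isotopy, not the local models.
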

  In fact, in Corollary \ref{SRFcodim1} we will recover all cohomogeneity one actions, together with these three non-homogeneous singular Riemannian foliations of codimension 1, one of which on $\mathbb{CP}^2\#-\mathbb{CP}^2$ and the other two on $\mathbb{CP}^2\#\mathbb{CP}^2$ (see some further description in Subsection \ref{3-non-homog}). Notice that $\mathbb{CP}^2\#\mathbb{CP}^2$ does not admit cohomogeneity one actions, though it indeed admits metrics of non-negative curvatures as the other four cohomogeneity one 4-manifolds. This suggests the following
\begin{conj}
Any closed simply connected non-negatively curved Riemannian manifold admits singular Riemannian foliations of codimension 1, under the same metric or a different bundle-like metric.
\end{conj}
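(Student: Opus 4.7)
Since the final statement is a Conjecture, no complete proof is currently known; what follows is a plan for making meaningful progress, split by dimension because the state of knowledge about closed simply connected non-negatively curved manifolds is radically different in low versus high dimensions.

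In dimension $\leq 4$, the plan is essentially completed by the paper. Dimensions $\leq 3$ are classical. In dimension $4$, every known closed simply connected non-negatively curved manifold lies in the list $\sphere^4$, $\CC\PP^2$, $\sphere^2\times\sphere^2$, $\CC\PP^2\#\pm\CC\PP^2$; Theorem \ref{splitting thm}, combined with Parker's cohomogeneity one classification and the three non-homogeneous examples described in Subsection \ref{3-non-homog}, exhibits a codimension 1 singular Riemannian foliation on each. Thus the $4$-dimensional case would follow from the folklore expectation that the five elliptic manifolds exhaust the closed simply connected non-negatively curved $4$-manifolds.

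In higher dimensions, I would pursue two complementary strategies. First, use the machinery developed for non-negatively curved manifolds with symmetry (Grove-Searle, Wilking's connectedness principle, Grove-Ziller's fat bundle constructions, biquotient methods) to show that a large class of such manifolds admits an isometric torus action, and then extract a codimension 1 SRF by restricting to a cohomogeneity one sub-action or by using the distance function to the boundary of the orbit space. Second, look for non-equivariant reductions: given a fibration $F \hookrightarrow M \to B$ compatible with the non-negatively curved metric (or a modified one), pull back codimension 1 SRFs from $B$ so that an inductive procedure applies. Both of these strategies benefit from the freedom to switch to a different bundle-like metric, as the statement permits.

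The main obstacle is structural. Without any classification of closed simply connected non-negatively curved manifolds in dimensions $>4$, and with basic topological constraints (Hopf's, Bott's) still conjectural, any proof must be genuinely soft and geometric rather than list-checking. The hardest scenario is a manifold with no continuous isometries at all; for such a manifold one would need to construct a codimension 1 foliation from purely metric data, for instance via Busemann-type or distance functions, or by first producing a Riemannian submersion onto a $1$-dimensional base and then defining the bundle-like metric from it. Constructing such structures in the complete absence of symmetry input is, at present, the central difficulty.
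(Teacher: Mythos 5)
You are right that this is an open conjecture and the paper offers no proof of it: it is stated as a strengthened version of Grove's conjecture that every compact non-negatively curved manifold splits as a union of two disk bundles, and the paper only observes the one-way implication from the codimension one SRF version to the disk-bundle version. Your account of dimensions $\leq 4$ matches what the paper actually establishes: Corollary \ref{SRFcodim1} (Table \ref{table-SRF of codim1}) exhibits codimension one singular Riemannian foliations on each of $\sphere^4$, $\CC\PP^2$, $\sphere^2\times\sphere^2$, $\CC\PP^2\#\pm\CC\PP^2$, by combining the homogeneous foliations coming from Parker's cohomogeneity one classification with the three non-homogeneous foliations of Subsection \ref{3-non-homog}, so the dimension four case would follow from the still-open expectation that these five manifolds exhaust the closed simply connected non-negatively curved $4$-manifolds. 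Beyond dimension four your proposal is, as you say, a plan rather than a proof, and that is the honest position.

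One distinction worth keeping sharp in any attempt: the conjecture asks for a singular Riemannian foliation together with a bundle-like metric, which is strictly more structure than the topological double disk bundle decomposition in Grove's original formulation. Producing the latter (say from Busemann or distance-function arguments, or from an orbit space boundary) does not automatically yield a bundle-like metric and the corresponding smooth SRF. The paper handles this passage in dimension four via the results of \cite{QT13} and the explicit isometric gluings in the proof of Corollary \ref{SRFcodim1}, and the Gluing Lemma (Lemma \ref{L:gluing}) is precisely the technical device for upgrading topological gluings to foliated-Riemannian ones; an analogue of that step would have to be built into any higher-dimensional argument, so your second strategy should explicitly include a metric-smoothing component rather than treating the bundle-like metric as automatic.
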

 Existence of such foliations ensures that topologically the manifold admits a splitting structure into two disk bundles as in Theorem \ref{splitting thm}. Hence this conjecture is essentially attributed to Karsten Grove who conjectured that every compact non-negatively curved manifold splits as a union of two disk bundles. A positive answer to this conjecture will in particular solve affirmatively the long-standing conjecture: a closed simply connected non-negatively curved 4-manifold is diffeomorphic to one of the standard $\mathbb{S}^4$, $\mathbb{CP}^2$, $\mathbb{S}^2\times\mathbb{S}^2$, or $\mathbb{CP}^2\#\pm \mathbb{CP}^2$. The latter conjecture is still open even for homeomorphism.

In the second part of this paper we turn to consider singular Riemannian foliations of general codimension. At last we are able to recover and generalize the differentiable classification obtained for group actions.
\begin{thm}\label{SRF thm}
Let $N$ be a closed simply connected $4$-manifold admitting a nontrivial singular Riemannian foliation. Then it is diffeomorphic to a connected sum of
copies of standard $\mathbb{S}^4$, $\pm \mathbb{CP}^2$ and $\mathbb{S}^2\times \mathbb{S}^2$.
\end{thm}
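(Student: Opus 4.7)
The plan is to reduce Theorem \ref{SRF thm} to Theorem \ref{splitting thm} by extracting a splitting structure from the singular Riemannian foliation $\mathcal{F}$. I would split into cases by the codimension $c\in\{1,2,3\}$ of the nontrivial SRF $\mathcal{F}$ on $M^4$.

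For the codimension-$1$ case, the argument would go as follows. The leaf space $M/\mathcal{F}$ is a $1$-dimensional Alexandrov space, and since $\pi_1(M)=1$ with $M$ closed it cannot be $\mathbb{S}^1$ (otherwise $\pi_1(M)$ would surject onto $\mathbb{Z}$), so $M/\mathcal{F}\cong[0,1]$. The two endpoints correspond to singular leaves $L_0,L_1$, each of codimension $\geq 2$ in $M$, since a singular leaf of an SRF always has codimension strictly greater than the regular codimension. By the tubular neighborhood theorem for SRFs one then obtains $M=\Tub(L_0)\cup_\partial \Tub(L_1)$, a union of two disk bundles of rank $\geq 2$ glued along a regular leaf. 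Theorem \ref{splitting thm} identifies $M$ with one of $\mathbb{S}^4,\mathbb{CP}^2,\mathbb{S}^2\times\mathbb{S}^2,\mathbb{CP}^2\#\pm\mathbb{CP}^2$, all of which lie in the target family.

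For $c\geq 2$, I would try to find a coarser codimension-$1$ SRF on $M$ and reduce to the previous case. A first candidate is the closure foliation $\overline{\mathcal{F}}$: by Molino theory it is an SRF of codimension $\leq c$, and if its codimension equals $1$ we are done. Otherwise every leaf of $\mathcal{F}$ is already closed, and the leaf space $X=M/\mathcal{F}$ is a compact Alexandrov space of dimension $c\in\{2,3\}$. In this setting I would use the orbit-type stratification of $X$: pick a geodesic arc $\gamma$ in $X$ joining two maximally singular strata, and study $\pi^{-1}(\gamma)\subset M$. The complement $M\setminus\pi^{-1}(\gamma)$ should split into two disk-bundle pieces over the endpoint strata, enabling another application of Theorem \ref{splitting thm}. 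As an alternative, one can work locally via the infinitesimal linear SRF at each singular stratum, classify the low-rank local models, and reassemble them by Mayer--Vietoris to recognise $M$ as a connected sum of $\mathbb{S}^4,\pm\mathbb{CP}^2,\mathbb{S}^2\times\mathbb{S}^2$.

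The main obstacle will be the $c\geq 2$ case: producing a codimension-$1$ coarsening, or equivalently a globally defined disk-bundle splitting, from a higher-codimension SRF is not automatic. In codimension $2$ one must handle $2$-dimensional leaves whose local structure may include non-homogeneous singular strata; in codimension $3$ the $1$-dimensional leaves form a Seifert-like structure whose singular set must be carefully controlled. I expect the technical core of the proof to be a careful case analysis of these low-dimensional leaf spaces, combined with the classification of linear SRFs on disks of small rank, to rule out exotic smooth types and match $M$ with one of the allowed connected sums.
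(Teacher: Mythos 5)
Your codimension-$1$ analysis is sound and matches the paper, but the $c\geq 2$ part has genuine gaps that your own hedging at the end correctly anticipates but does not resolve.

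The central missing point is that for a closed codimension-$2$ foliation with torus leaves, the leaf space is a disk with corners and can have \emph{arbitrarily many} corner points. When there are more than four corners, $M$ does \emph{not} split as a union of two disk bundles of rank $\geq 2$, so Theorem \ref{splitting thm} cannot be invoked directly; indeed this is exactly why Theorem \ref{SRF thm} is stated in terms of connected sums rather than the short list of Theorem \ref{splitting thm}. The paper handles this by an inductive \emph{connected-sum} decomposition: using the Orlik--Raymond-type edge weights one finds an $(E_i,E_j)$-curve whose preimage is an embedded $\sphere^3$, cuts $M$ along it, caps each piece with a foliated $\DD^4$, and then needs a nontrivial Gluing Lemma to produce a bundle-like metric on each capped piece so the inductive hypothesis applies. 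Your sketch of ``pick a geodesic arc $\gamma$ and study $\pi^{-1}(\gamma)$'' points in the right direction, but without identifying the arithmetic of the weights that forces an $\sphere^3$ slice, and without the gluing-metric construction, the induction does not close.

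Two smaller issues. First, the step ``if $\overline{\mathcal F}$ does not have codimension $1$, then every leaf of $\mathcal F$ is already closed'' is not valid: e.g.\ a non-closed $1$-dimensional foliation may have closure of dimension $2$ (codimension $2$). The paper instead reduces to the closed case by proving Molino's conjecture in dimension $4$ (so that $\overline{\mathcal F}$ is an SRF) and then invoking Ghys to guarantee a compact leaf. Second, the codimension-$3$ (dimension-$1$) closed case does not reduce to a disk-bundle splitting; the paper cites the separate result of Galaz-Garcia--Radeschi that such a foliation comes from a smooth circle action, then applies Fintushel's classification. You also would need the regular codimension-$2$ case (no singular leaves), which the paper settles via the Haefliger classifying space to show the regular leaf must be $\sphere^2$ and $M$ is an $\sphere^2$-bundle over $\sphere^2$; this subcase is absent from your outline.
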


When the singular Riemannian foliation is closed and of dimension $1$, this result has been proven by Galaz-Garcia and the second named author \cite[Cor. 8.6]{GaRa13}, by showing that such a foliation comes from a smooth effective circle action which then derives the conclusion from the classification of circle actions on $4$-manifolds by Fintushel \cite{Fi77, Fi78}. When the foliation is of dimension $3$, Theorem \ref{SRF thm} reduces to Theorem \ref{splitting thm}.

\section{Preliminaries}\label{Preliminaries}
In this section we collect some background materials on singular Riemannian foliations, most part of which is based on the Preliminary section of \cite{GaRa13}. We also refer the reader to \cite{Mol88,ABT13} for further results on the theory.
\subsection{Singular Riemannian foliations}
\label{SS:SRF} A \emph{transnormal system} $\fol$ on a complete Riemannian manifold $M$ is a decomposition of $M$ into complete, injectively
immersed connected submanifolds, called \emph{leaves}, such that every geodesic emanating perpendicularly to one leaf remains perpendicular to all
leaves. A \emph{singular Riemannian foliation}  is a transnormal system $\mathcal{F}$ which is also a \emph{singular foliation}, i.e., such that there are smooth vector fields $X_i$ on $M$ that span the tangent space $T_p L_p$ to the leaf $L_p$ through each point $p\in M$. If furthermore $\mathcal{F}$ is regular, i.e., the leaves have the same dimension, then $\mathcal{F}$ is called a \emph{Riemannian foliation}.
If $M$ is a smooth manifold and $(M, \fol)$ is a singular foliation, a metric $g$ on $M$ is called \emph{bundle-like} for $\fol$ if $(M,g,\fol)$ becomes a singular Riemannian foliation. Slightly abusing notation, the
pair $(M,\fol)$ will also denote a singular Riemannian foliation $\fol$ on a complete Riemannian manifold $(M,g)$.

We will call the quotient space $M/\fol$ the \emph{leaf space}, and will also denote it by $M^*$. We will let $\pi: M\to M/\fol$ be the leaf projection map. A singular Riemannian foliation $\fol$ will be called \emph{closed} if all its leaves are closed in $M$.
If $\fol$ is closed, then the leaf space $M/\fol$ is a Hausdorff metric space.

A leaf of maximal dimension is called a \emph{regular} leaf, and its dimension is defined to be the \emph{dimension} of $\mathcal{F}$, denoted by $\dim \mathcal{F}$. Leaves of lower dimensions are called \emph{singular} leaves. The \emph{codimension} of $\mathcal{F}$ is defined to be the codimension of a regular leaf. A singular Riemannian foliation $(M,\fol)$ of codimension one is called an \emph{isoparametric} foliation if the regular leaves have constant mean curvature, and a \emph{totally isoparametric} foliation if the regular leaves have constant principal curvatures in $M$. In fact, when $(M,\fol)$ gives a splitting structure as (\ref{splitting str}), it can become isoparametric by modifying the bundle-like metric on $M$ (cf. \cite{QT13}). A codimension one closed singular Riemannian (resp. isoparametric, totally isoparametric) foliation on a closed Riemannian manifold would be given by level sets of a transnormal (resp. isoparametric, totally isoparametric) function, which we do not introduce here but refer the reader to \cite{GT12, GTY11, GW09, QT13, Th10, Wa87}.

\subsection{Stratification}
Let $(M,\fol)$  be a singular Riemannian foliation. For any point $p\in M$, we denote by $L_p$ the leaf of $\fol$ through $p$.
For $k\leq \dim \mathcal{F}$, define
\[
\Sigma_{k} =\{\, p\in M : \dim L_p =k\,\}.
\]
Every connected component $C$ of the set $\Sigma_{k}$, called a \emph{stratum}, is an embedded (possibly non-complete) submanifold of $M$ and the restriction of $\fol$ to $C$
is a Riemannian foliation. Moreover, any horizontal geodesic (perpendicular to the leaves) tangent to $\Sigma_k$, stays in the closure of $\Sigma_k$ for all time.
The subset $\Sigma_{\dim \fol}$ of regular leaves is open, dense and connected in $M$; it is called the \emph{regular stratum} of $M$, and it will be  denoted by $M_0$. All other strata have codimension
at least $2$ in $M$ and are called \emph{singular strata}.

The quotient $M/\fol$ inherits a stratification from $M$, where the strata are the projections $\Sigma/\fol$ of the strata $\Sigma$ of $\fol$.  Any such stratum $\Sigma/\fol$ is an orbifold and in particular the regular stratum $M_0/\fol$ is an orbifold which is open and dense in $M/\fol$.

\subsection{Holonomy map}\label{SS:holonomy}The fundamental group $\pi_1(L_p)$ acts on the normal space $\nu_pL_p$ of a regular leaf $L_p$, in such a way that if $x,y\in \nu_pL_p$ belong to the same $\pi_1(L_p)$-orbit, then $\exp_p(tx)$, $\exp_p(ty)$ belong to the same leaf, for all $t$. Such an action is called \emph{holonomy map}. Fixed an $\epsilon$-tubular neighborhood $U$ of $L_p$ for some small $\epsilon$, the universal cover $\tilde{U}$ admits a foliation $\tilde{\fol}$ which is the lift of $(U,\fol)$. One checks that $\tilde{U}$ splits as a product $\tilde{L}_{p}\times D$, where $D$ is an $\epsilon$-disk in $\nu_pL_p$ around the origin. Therefore $U=\tilde{L}_p\times_{\pi_1(L_p)}D$, where $\pi_1(L_p)$ acts by deck transformations on the first factor, and by the holonomy map on the second. In particular, the normal bundle is orientable if and only if the holonomy map acts preserving the orientation of $\nu_pL_p$. Moreover, the holonomy group is the local group of the orbifold $M_0/\Sigma$ at the point $\pi(p)$ (cf. \cite[Section 3.6]{Mol88}).

A regular leaf $L$ is called \emph{principal} if the holonomy map acts trivially on the normal space $\nu_pL$ (the definition is independent on the point $p\in L$) and \emph{exceptional} otherwise. From the local description of $\fol$ around $L$, a regular leaf is principal if and only if it projects to a manifold point of $M_0/\fol$.

\subsection{Infinitesimal singular Riemannian foliations}
\label{SSS:INF_SRF} Given a point $p\in M$, let $\sphere_p$ be
the unit sphere in the normal space $\nu_pL_p$ of the leaf through $p$. On $\sphere_p$ we define a foliation $\fol_p$ by saying that $x,y\in
\sphere_p$ belong to the same leaf in $\fol_p$ if $\exp_p(\epsilon x)$ and $\exp_p(\epsilon y)$ belong to the same leaf of $\fol$, for every
$\epsilon>0$ small enough. If $\sphere_p$ is equipped with the round metric, the foliation $(\sphere_p,\fol_p)$ is a singular Riemannian foliation,
and it is called the \emph{infinitesimal foliation} of $\fol$ at $p$. If $p$ is a regular point, $\fol_p$ is the trivial foliation whose leaves are points.

Infinitesimal foliations are useful to understand the relation between leaves of different types, as follows. Consider a point $p\in M$, a vector $x\in \sphere_p$, and let $q=\exp_p\epsilon x$. If $\epsilon$ is small enough, there is a well defined, smooth closest-point projection $\mathsf{p}:L_q\to L_p$, that is a locally trivial fibration. Moreover, the connected component of the fiber of $\mathsf{p}$ through $q$ can be identified with the leaf $\mathcal{L}_x\in \fol_p$ through $x$. There is a cover $\ol{L}_p\to L_p$ of $L_p$ such that $\mathsf{p}$ lifts to a fibration $\ol{\mathsf{p}}:L_q\to \overline{L}_p$ with connected fiber $\mathcal{L}_x$ (just take $\ol{L}_p=\tilde{L}_p/\mathsf{p}_*(\pi_1(L_q))$ where $\tilde{L}_p$ is the universal cover of $L_p$):
\begin{equation}
\label{E:Inf_fibration}
\mathcal{L}_x\lra L_q\lra \ol{L}_p.
\end{equation}

\section{Singular Riemannian foliations of codimension one}\label{Proof of splitting thm}
In this section we first prove Theorem \ref{splitting thm} which is essentially the codimension $1$ case of Theorem \ref{SRF thm}, and then give a classification of singular Riemannian foliations of codimension $1$ on closed simply connected $4$-manifolds in Corollary \ref{SRFcodim1}. Throughout this paper we denote diffeomorphisms
and homeomorphisms by $``\cong"$ and $``\simeq"$ respectively.

Let $N$ be a closed  simply connected $4$-manifold obtained by gluing two disk bundles over closed submanifolds $M_{\pm}$ of codimension
$m_{\pm}$ greater than one, i.e.,
\begin{equation}\label{splitting str}
N=D(M_{+})\bigcup_{f}D(M_{-}),
\end{equation}
where $f: \partial D(M_{+})\rightarrow\partial D(M_{-})$ is a diffeomorphism between the boundaries of the disk bundles $D(M_{\pm})$ of rank $m_{\pm}$
over $M_{\pm}$. We denote the common boundary by $M:=\partial D(M_{+})\cong\partial D(M_{-})$ and it follows from the proof of \cite[Cor. 11.4 and
Thm. 11.3]{MS74} that $M$ is an orientable hypersurface of $N$. Without loss of generality, we assume $2\leq m_{+}\leq
m_{-}\leq4$. As remarked in the introduction, the case when $N$ is a $4$-sphere has been solved in \cite{GT12} and henceforth we assume $\dim H^2(N,\mathbb{Z}_2)=b_2\geq1$ for simplicity, although this case could also be derived by the same arguments.

For a splitting structure as (\ref{splitting str}), we have the following exact cohomology sequences (\cite{Du11,He81}):
\begin{equation}\label{exact cohom seq}
\cdots\rightarrow H^{i-1}(M_{\pm},\mathbb{Z}_2)\xlongrightarrow{\theta_{\pm}} H^{i-m_{\mp}}(M_{\mp},\mathbb{Z}_2)\xlongrightarrow{\alpha_{\pm}} H^i(N,
\mathbb{Z}_2)\xlongrightarrow{i_{\pm}^*} H^{i}(M_{\pm},\mathbb{Z}_2)\rightarrow\cdots.
\end{equation}
Since $N$ is simply connected, by Poincar\'{e} duality we have $H^1(N,\mathbb{Z}_2)=H^3(N,\mathbb{Z}_2)=0$.

To prove Theorem \ref{splitting thm} we analyze case by case according to the value of $m_{-}\in\{2,3,4\}$ in the following subsections.
\subsection{$m_{-}=4$}
Then $M_{-}=\{pt\}$ is a point in $N$ and $M\cong\mathbb{S}^3$. By (\ref{exact cohom seq}) we have
\begin{equation*}
0\longrightarrow H^{2-m_{+}}(M_{+},\mathbb{Z}_2)\xlongrightarrow{\alpha_{-}}H^2(N, \mathbb{Z}_2)\xlongrightarrow{i_{-}^*}
0\xlongrightarrow{\theta_{-}}H^{3-m_{+}}(M_{+},\mathbb{Z}_2)\longrightarrow 0,
\end{equation*}
which, due to the assumption $b_2\geq1$, implies $m_{+}=2$, $H^1(M_{+},\mathbb{Z}_2)=0$ and $H^2(N,
\mathbb{Z}_2)=H^0(M_{+},\mathbb{Z}_2)=\mathbb{Z}_2$. Because $2$-dimensional manifolds are determined by their cohomology structures and closed   simply
connected $4$-manifolds are determined up to homeomorphism by the second Betti number when it is less than $2$ (cf. \cite{FQ90}) , the equalities above lead to
$M_{+}\cong\mathbb{S}^2\cong\mathbb{CP}^1$, $N\simeq\mathbb{CP}^2$ and
\begin{equation*}
N=D(\mathbb{CP}^1)\bigcup_f D^4,
\end{equation*}
where $f\in \Diff(\mathbb{S}^3)$ is a diffeomorphism of the common boundary $\mathbb{S}^3$ of the $4$-disk $D^4$ and the $2$-disk bundle
$D(\mathbb{CP}^1)$. Meanwhile it is known that one of the two isoparametric foliations in $\mathbb{CP}^2$ (Both are homogeneous! cf. \cite{GTY11},
etc.) splits $\mathbb{CP}^2$ as
\begin{equation*}
\mathbb{CP}^2=D(\mathbb{CP}^1)\bigcup_{id} D^4,
\end{equation*}
where $\mathbb{CP}^1\subset \mathbb{CP}^2$ is the canonical inclusion and $id$ is the identity map of the distance sphere around the focal point of
$\mathbb{CP}^1$ in $\mathbb{CP}^2$. Note that one still gets $\mathbb{CP}^2$ if the gluing map $id$ were replaced by an orientation reversing isometry
(e.g. a reflection) since it can be radially extended to the $4$-disk $D^4$.  Now by Cerf \cite{Ce68} and Hatcher \cite{Ha83}, $f\in \Diff(\mathbb{S}^3)$ is
isotopic to $id$ up to an orientation reversing isometry, and therefore using standard argument with the isotopy extension theorem (cf. \cite[Thm. 2.3]{Hi76}) we obtain $N\cong\mathbb{CP}^2$.
\subsection{$m_{-}=3$}
Then $M_{-}\cong \mathbb{S}^1$ and $M\cong \mathbb{S}^1\times \mathbb{S}^2$ since the only nontrivial $2$-sphere bundle over $\mathbb{S}^1$ is
non-orientable and thus impossible as discussed before. Using (\ref{exact cohom seq}) again we get the short exact sequence
\begin{equation*}
0\longrightarrow\mathbb{Z}_2 \xlongrightarrow{\theta_{-}}H^{2-m_{+}}(M_{+},\mathbb{Z}_2)\xlongrightarrow{\alpha_{-}}H^2(N,
\mathbb{Z}_2)\longrightarrow 0,
\end{equation*}
which implies $m_{+}=2$ and $\mathbb{Z}_2=H^0(M_{+},\mathbb{Z}_2)=\mathbb{Z}_2\oplus H^2(N, \mathbb{Z}_2)$, contradicting with the assumption
$b_2\geq1$. This case occurs only if $N\cong \sphere^4$.
\subsection{$m_{-}=2$}
Then $m_{+}=2$. By (\ref{exact cohom seq}) we have the following exact sequence
\begin{equation*}
0\longrightarrow H^1(M_{-},\mathbb{Z}_2)\xlongrightarrow{\theta_{-}}\mathbb{Z}_2\xlongrightarrow{\alpha_{-}}H^2(N,
\mathbb{Z}_2)\xlongrightarrow{i_{-}^*} \mathbb{Z}_2\xlongrightarrow{\theta_{-}}H^1(M_{+},\mathbb{Z}_2)\longrightarrow 0,
\end{equation*}
which gives
\begin{equation}\label{b2 equations}
\begin{array}{ll}
\mathbb{Z}_2=H^1(M_{-},\mathbb{Z}_2)\oplus \img(\alpha_{-}), &\mathbb{Z}_2=H^1(M_{+},\mathbb{Z}_2)\oplus \img(i_{-}^*), \\
H^2(N, \mathbb{Z}_2)=\img(\alpha_{-})\oplus \img(i_{-}^*).
\end{array}
\end{equation}
These equations show that $\dim H^2(N, \mathbb{Z}_2)=b_2\leq2$ and hence $b_2=1$ or $2$.

When $b_2=1$, $N\simeq \mathbb{CP}^2$ (cf. \cite{FQ90}) and either $\img(\alpha_{-})=0$, $\img(i_{-}^*)=\mathbb{Z}_2$ or $\img(\alpha_{-})=\mathbb{Z}_2$,
$\img(i_{-}^*)=0$, correspondingly either $H^1(M_{-},\mathbb{Z}_2)=\mathbb{Z}_2$, $H^1(M_{+},\mathbb{Z}_2)=0$ or $H^1(M_{-},\mathbb{Z}_2)=0$,
$H^1(M_{+},\mathbb{Z}_2)=\mathbb{Z}_2$. Therefore $M_{\pm}$ are $\mathbb{S}^2$ and $\mathbb{RP}^2$. It is well known (cf. \cite{St51}) that any orientable circle bundle with Euler number $m$ over $\mathbb{S}^2$ is a lens space $L(m,1)$\footnote{Here we ignore the specific orientations and identify $L(\pm m,1)$.} which is a quotient of $\sphere^3$ by some $\mathbb{Z}_m$-action. Recall (cf. \cite{Mo87}) that the orientable\footnote{Here orientability refers to that of the total space.} circle bundle over $\mathbb{RP}^2$ with Euler number $e$, denoted by $(On1\mid e)$, is two-fold covered by the orientable circle bundle over $\mathbb{S}^2$ with Euler number $2e$, denoted by $(Oo0\mid 2e)$. Moreover, $(On1\mid 1)\cong (Oo0\mid 4)\cong L(4,1)$. Thus $(On1\mid e)$ is the quotient of $\sphere^3$ (viewed as the unit quaternions) by the subgroup $Q_{4e}$ generated by $\omega=\cos(\pi/|e|)+i\sin(\pi/|e|)$ and $j$ (cf. \cite{La85}). Note that $Q_{4e}$ is a binary dihedral group which is abelian only if $e=1$.  It follows that the common boundary $M$, as circle bundles over
both $\mathbb{S}^2$ and $\mathbb{RP}^2$, can only be $\sphere^3/Q_4=L(4,1)$.

In conclusion, we have proved
  \begin{equation*}
  N=D(\mathbb{S}^2)\bigcup_fD(\mathbb{RP}^2),
  \end{equation*}
  where $f\in \Diff(L(4,1))$ is a diffeomorphism of the common boundary $M\cong L(4,1)$.
  On the other hand, it is known that the other (homogeneous) isoparametric foliation on $\mathbb{CP}^2$ decomposes it as (\cite{GTY11},
  \cite{Tak73})
\begin{equation*}
\mathbb{CP}^2=D(\mathbb{Q}^1)\bigcup_{id}D(\mathbb{RP}^2),
\end{equation*}
where $\mathbb{Q}^1\cong \mathbb{S}^2$ is the standard complex quadric in $\mathbb{CP}^2$ and $id$ is the identity on $M=\partial
D(\mathbb{Q}^1)=\partial D(\mathbb{RP}^2)=L(4,1)$. Due to the celebrated work on the (generalized) Smale Conjecture of Hong, et al. \cite{HKMR12},
the inclusion of the isometry group $\Iso(L(m,q))\rightarrow \Diff(L(m,q))$ of any lens space $L(m,q)$ $(m\geq3)$ is a homotopy equivalence.
As a result, $f\in \Diff(L(4,1))$ is isotopic to the identity $id$ or an orientation-preserving isometry $f_0\in \Iso(L(4,1))=O(2)\times SO(3)$ which
preserves the fibration of $L(4,1)$ over $\mathbb{S}^2$ and hence is radially extendable to an orientation-preserving diffeomorphism $\widetilde{f_0}$ of the
corresponding disk bundle $D(\mathbb{S}^2)$. This derives
\begin{equation}\label{gluingdiffeom}
N=D(\mathbb{S}^2)\bigcup_fD(\mathbb{RP}^2)\cong D(\mathbb{S}^2)\bigcup_{id~ or ~f_0}D(\mathbb{RP}^2)\cong
D(\mathbb{Q}^1)\bigcup_{id}D(\mathbb{RP}^2)=\mathbb{CP}^2,
\end{equation}
where the second diffeomorphism with respect to $f_0$ comes from gluing $\widetilde{f_0}: D(\mathbb{S}^2)\rightarrow D(\mathbb{Q}^1)$ and the identity
$id: D(\mathbb{RP}^2)\rightarrow D(\mathbb{RP}^2)$ (cf. \cite[Thm. 2.2]{Hi76}).
\begin{rem}\label{pi0part}
In fact, here and later on, it is sufficient to use only the ``$\pi_0$-part" of the Smale conjecture: $\pi_0(\Iso(M))\rightarrow \pi_0(\Diff(M))$ is an
isomorphism induced from the natural inclusion of $\Iso(M)$ in $\Diff(M)$. This part has been confirmed for every elliptic $3$-manifold (cf. \cite{Ce68},
\cite{Mc02} and references therein). In particular, $\pi_0(\Diff(L(m,1)))\cong\pi_0(\Iso(L(m,1)))\cong C_2$ is a cyclic group of order 2 for each
$m\geq1$. Moreover, it consists of (path components of) the identity and an orientation-reversing isometry $f_1$ if $m=1$ or $2$, and an
orientation-preserving isometry $f_0$, which preserves the fibration of $L(m,1)$ over $\mathbb{S}^2$ and hence is radially extendable to an orientation-preserving
diffeomorphism $\widetilde{f_0}$ of the corresponding disk bundle $D(\mathbb{S}^2)$, if $m\geq3$.
\end{rem}

Now we turn to deal with the case when $b_2=2$, i.e., $N\simeq \mathbb{S}^2\times \mathbb{S}^2$, or $\mathbb{CP}^2\# \pm \mathbb{CP}^2$ (cf.
\cite{FQ90}). By (\ref{b2 equations}) we have $H^1(M_{\pm},\mathbb{Z}_2)=0$ and thus $M_{+}\cong M_{-}\cong \mathbb{S}^2$, $M$ is a lens space
$L(m,1)$ for some $m\geq 0$. These give a splitting structure on $N$ as
\begin{equation}\label{gluingdiskbundlesoverS2}
N=D(\mathbb{S}^2)\bigcup_fD(\mathbb{S}^2),
\end{equation}
where $f\in \Diff(L(m,1))$ is a gluing diffeomorphism on the common boundary $M\cong L(m,1)$ for some $m\geq 0$. Note that $m$ is just the self
intersection number of either $\mathbb{S}^2$ in $N$ or equivalently the Euler number of the circle bundles $L(m,1)\rightarrow \mathbb{S}^2$. The proof
of this case (and hence of the total Theorem \ref{splitting thm}) will be completed by the following.
\begin{prop}
Let $N$ be a closed   (simply connected) $4$-manifold admitting a splitting structure as $(\ref{gluingdiskbundlesoverS2})$. Then $N$ is diffeomorphic to
$\mathbb{S}^2\times \mathbb{S}^2$, or $\mathbb{CP}^2\# \pm \mathbb{CP}^2$.
\end{prop}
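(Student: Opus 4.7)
The plan is to analyze the splitting by combining (i) the Euler numbers $e_\pm\in\mathbb{Z}$ of the rank-two disk bundles $D(M_\pm)$ over $M_\pm\cong \mathbb{S}^2$, which satisfy $|e_+|=|e_-|=m$ since $\partial D(M_\pm)\cong L(m,1)$, with (ii) the isotopy class of the gluing $f\in\Diff(L(m,1))$ (or $\Diff(\mathbb{S}^2\times\mathbb{S}^1)$ when $m=0$), and to identify each resulting $N$ with one of the three listed standard smooth manifolds. As in the $b_2=1$ case handled earlier, the $\pi_0$-part of the Smale conjecture on elliptic 3-manifolds (Remark \ref{pi0part}) together with Hatcher's theorem on $\Diff(\mathbb{S}^2\times\mathbb{S}^1)$ ensures that $f$ is isotopic to one of finitely many standard isometries; those preserving the Hopf fibration $L(m,1)\to \mathbb{S}^2$ extend radially over $D(\mathbb{S}^2)$, mirroring the derivation of (\ref{gluingdiffeom}), so after absorbing such extensions we may assume $f$ is the canonical identification of the two $\mathbb{S}^1$-bundles over $\mathbb{S}^2$, with only the signs of $e_\pm$ remaining as genuine data.

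Two subcases then remain. If $e_++e_-=0$, the $D^2$-fibers of $D(M_+)$ and $D(M_-)$ glue across their circle boundaries to form $\mathbb{S}^2$-fibers, endowing $N$ with the structure of a smooth $\mathbb{S}^2$-bundle over $\mathbb{S}^2$; such bundles are classified by $\pi_1(\SO(3))=\mathbb{Z}_2$, yielding $\mathbb{S}^2\times\mathbb{S}^2$ when $m$ is even and the twisted bundle $\mathbb{CP}^2\#(-\mathbb{CP}^2)$ when $m$ is odd (using the diffeomorphism of Hirzebruch surfaces $\mathbb{F}_m\cong \mathbb{F}_{m\bmod 2}$). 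If $e_+=e_-$, the Mayer--Vietoris sequence $0\to\mathbb{Z}^2\to H_2(N)\to\mathbb{Z}_m\to 0$ together with the unimodularity of the intersection form on $H_2(N)\cong \mathbb{Z}^2$ forces the form to be definite of type $\operatorname{diag}(\pm 1,\pm 1)$; Donaldson's theorem on definite forms combined with Freedman's classification then identifies $N$ homeomorphically with $\mathbb{CP}^2\#\mathbb{CP}^2$ (or its orientation-reverse, diffeomorphic as unoriented manifolds).

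The main obstacle is promoting this homeomorphism to a diffeomorphism, which is delicate since exotic smooth structures on these manifolds are not fully classified. To resolve this I would exhibit each of the three target manifolds as an explicit standard splitting $D(\mathbb{S}^2)\cup_{\mathrm{id}}D(\mathbb{S}^2)$ with prescribed small Euler numbers---for instance, $\mathbb{CP}^2\#\pm\mathbb{CP}^2=(\mathbb{CP}^2\setminus B^4)\cup_{\mathbb{S}^3}(\pm\mathbb{CP}^2\setminus B^4)$ realizes Case (b) with $m=1$, while the trivial and twisted $\mathbb{S}^2$-bundles realize Case (a) with $m\in\{0,1\}$---and then apply the isotopy extension theorem (cf. \cite[Thm. 2.3]{Hi76}) exactly as in the derivation of (\ref{gluingdiffeom}) to upgrade the standard smooth splitting into a diffeomorphism with the given $N$. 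Higher-$m$ cases reduce to these minimal-$m$ models via Kirby-calculus handle slides (or, in Case (a), the Hirzebruch identification $\mathbb{F}_m\cong\mathbb{F}_{m\bmod 2}$), so no further smooth types appear and the list $\mathbb{S}^2\times\mathbb{S}^2,\mathbb{CP}^2\#\pm\mathbb{CP}^2$ is exhaustive.
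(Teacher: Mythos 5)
Your proposal captures the overall strategy of the paper, namely standardizing the gluing map $f$ up to isotopy (via the $\pi_0$-part of the Smale conjecture), identifying $\mathbb{S}^2$-bundles over $\mathbb{S}^2$ by the parity of the Euler number, and realizing the remaining manifolds by explicit model splittings combined with the isotopy extension theorem. However, there is a genuine gap in your standardization step at $m=0$. You assert that Hatcher's theorem on $\Diff(\mathbb{S}^1\times\mathbb{S}^2)$ ensures $f$ is isotopic to one of finitely many isometries, but this is false: $\pi_0\bigl(\Diff(\mathbb{S}^1\times\mathbb{S}^2)\bigr)\cong C_2\times C_2\times C_2$ strictly contains $\pi_0\bigl(\Iso(\mathbb{S}^1\times\mathbb{S}^2)\bigr)\cong C_2\times C_2$. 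The extra generator is the rotation $\tau(z,w)=(z,\phi(z)w)$ of \eqref{tau} (the Gluck-type twist), which is \emph{not} isotopic to an isometry, does \emph{not} preserve the circle fibration $\mathbb{S}^1\times\mathbb{S}^2\to\mathbb{S}^2$, and does \emph{not} radially extend to $\mathbb{S}^2\times D^2$. Consequently the reduction to ``$f$ is the canonical identification, with only signs of $e_\pm$ as data'' breaks precisely here, and your case (a) argument that ``the $D^2$-fibers glue across their circle boundaries to form $\mathbb{S}^2$-fibers'' does not apply, since this requires $f$ to carry circle fibers to circle fibers. The paper handles this case separately by reinterpreting the gluing as a clutching construction for an $\mathbb{S}^2$-bundle over $\mathbb{S}^2$ with clutching function $[\tau_0]=1\in\pi_1 SO(2)$, giving $\mathbb{CP}^2\#-\mathbb{CP}^2$; some such explicit argument is needed and is missing from your proposal.

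Two secondary remarks. First, your case (b) ($e_+=e_-$) invoking Donaldson's diagonalization and Freedman's classification is a valid route to the homeomorphism type, but it is a detour: the paper reaches $N\simeq\mathbb{CP}^2\#\mathbb{CP}^2$ more directly from $b_2=2$, $\sigma=\pm 2$ via Novikov additivity, and more importantly this case only occurs for $m=1,2$ (for $m>2$ every isotopy class in $\Diff(L(m,1))$ is orientation-preserving, so orientation-reversing gluings simply do not arise). Your framework should flag this, otherwise one appears to owe a diffeomorphism argument in a vacuous regime. Second, your suggestion to ``reduce higher-$m$ to minimal-$m$ models via Kirby-calculus handle slides'' is not needed and is left unjustified; the paper instead observes that for $m>2$ both isotopy classes give the same double $D_m(\mathbb{S}^2)\cup_{id}-D_m(\mathbb{S}^2)$, which is a linear $\mathbb{S}^2$-bundle over $\mathbb{S}^2$ classified by $m\bmod 2\in\pi_1 SO(3)$, so the upgrade from homeomorphism to diffeomorphism is automatic there and one only needs explicit model splittings at $m\in\{0,1,2\}$.
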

\begin{rem}
 The classification can be explicitly described as follows. Without loss of generality, we suppose the splitting structure
 (\ref{gluingdiskbundlesoverS2}) is given by gluing two copies of an oriented disk bundle $D_m(\mathbb{S}^2)$ of opposite orientations over
 $\mathbb{S}^2$ through a diffeomorphism $f:L(m,1)\rightarrow L(m,1)$ of the boundary with induced orientation from $D_m(\mathbb{S}^2)$ for some
 integer $m\geq0$. Then $N$ is diffeomorphic to
\begin{itemize}
 \item[(1)] $\mathbb{S}^2\times \mathbb{S}^2$  if $m>2$ is even, or $m=2$ and $[f]=[id]\in \pi_0(\Diff(\mathbb{RP}^3))\cong C_2$
     (orientation-preserving), or $m=0$ and $[f]\in \pi_0(\Iso(\mathbb{S}^1\times \mathbb{S}^2))\cong C_2\times C_2\subset
     \pi_0(\Diff(\mathbb{S}^1\times \mathbb{S}^2))\cong C_2\times C_2\times C_2$ (isotopic to an isometry);
 \item[(2)] $\mathbb{CP}^2\# \mathbb{CP}^2$ if $m=2$ and $[id]\neq[f]\in \pi_0(\Diff(\mathbb{RP}^3))\cong C_2$ (orientation-reversing), or $m=1$
     and $[id]\neq[f]\in \pi_0(\Diff(\mathbb{S}^3))\cong C_2$ (orientation-reversing);
 \item[(3)] $\mathbb{CP}^2\# -\mathbb{CP}^2$ if $m>2$ is odd, or $m=1$ and $[f]=[id]\in \pi_0(\Diff(\mathbb{S}^3))\cong C_2$
     (orientation-preserving), or $m=0$ and $[f]\in \pi_0(\Diff(\mathbb{S}^1\times \mathbb{S}^2))-\pi_0(\Iso(\mathbb{S}^1\times \mathbb{S}^2))$
     (not isotopic to an isometry).
\end{itemize}
\end{rem}
\begin{proof}
It follows from van Kampen theorem that $N$ is simply connected and thus orientable, implying the splitting structure
(\ref{gluingdiskbundlesoverS2}) has the form as described in the remark above. For $m>2$, $f\in \Diff(L(m,1))$ is isotopic to the identity $id$ or a
fiber-preserving orientation-preserving isometry $f_0$ which can be extended to an orientation-preserving diffeomorphism $\widetilde{f_0}$ of the disk
bundle $D_m(\mathbb{S}^2)$ (see Remark \ref{pi0part}), both giving rise to the same double manifold $N\cong
D_m(\mathbb{S}^2)\bigcup_{id}-D_m(\mathbb{S}^2)$ as in (\ref{gluingdiffeom}). It is easily seen from \cite{St51} that this double manifold is an oriented $\mathbb{S}^2$ bundle over $\mathbb{S}^2$ and $m\in\mathbb{Z}\cong \pi_1SO(2)$ determines a reduction of the structural group $SO(3)$ to $SO(2)$ of the $2$-sphere bundle. Note that $\pi_1SO(3)\cong\mathbb{Z}_2$, thus each even $m$ corresponds to the trivial bundle and each odd $m$ corresponds to the only nontrivial bundle, the total space of which is shown to be diffeomorphic to $\mathbb{CP}^2\# -\mathbb{CP}^2$ in \cite{St51}. Hence, $N$ is $\mathbb{S}^2\times \mathbb{S}^2$ if $m$ is even, $\mathbb{CP}^2\# -\mathbb{CP}^2$ if $m$ is odd. The same
argument holds for the cases when $m=1,2$ and $f$ is orientation-preserving, or when $m=0$ and $f$ is isotopic to an isometry, since each isometry in this
case can be radially extended to the trivial disk bundle $\mathbb{S}^2\times D^2$ (see the $\pi_0$-part or the homotopy type of $\Diff(\mathbb{S}^1\times
\mathbb{S}^2)$ in \cite{Gl62}, \cite{Ha81}).

When $m=2$ (resp. $1$) and $f$ is isotopic to an orientation-reversing isometry (all in one path component) in $\Iso(\mathbb{RP}^3)$ (resp.
$\Iso(\mathbb{S}^3)$), we would first get a homeomorphism $N\simeq \mathbb{CP}^2\# \mathbb{CP}^2$ by checking the second Betti number $b_2=2$ and
the signature $\sigma=2$ using the Novikov's additivity theorem. As there is only one path component for the gluing diffeomorphism $f$, i.e., all
gluing diffeomorphisms are isotopic to each other, using the standard gluing argument as in (\ref{gluingdiffeom}) would deduce a diffeomorphism once
we establish the same splitting structure on $\mathbb{CP}^2\# \mathbb{CP}^2$ for $m=2,1$ respectively. Therefore it is sufficient to check the existence of the
following splitting structures
\begin{equation*}
\mathbb{CP}^2\# \mathbb{CP}^2=D_2(\mathbb{S}^2)\bigcup_{id}D_2(\mathbb{S}^2)=D_1(\mathbb{S}^2)\bigcup_{id}D_1(\mathbb{S}^2),
\end{equation*}
where $id$ denotes the identity on $\partial D_2(\mathbb{S}^2)=\mathbb{RP}^3$ and $\partial D_1(\mathbb{S}^2)=\mathbb{S}^3$ respectively.
The existence can be confirmed as follows. Consider the two embeddings of $\mathbb{S}^2$ in $\mathbb{CP}^2\# \mathbb{CP}^2$ as two connected sums $\mathbb{CP}^1\#\mathbb{CP}^1$ via different embeddings of $\mathbb{CP}^1\subset
\mathbb{CP}^2$, e.g., defined by the vanishing of different coordinate functions. The neighborhoods of these two embeddings will give the splitting structure for the case $m=2$. Considering the standard embeddings of $\mathbb{CP}^1$ in
the two copies of $\mathbb{CP}^2$ in $\mathbb{CP}^2\# \mathbb{CP}^2$ will give rise to the splitting structure for the case $m=1$.

The last case left to analyze is when $m=0$ and $f$ is not isotopic to an isometry, i.e., it belongs to the four ``rotation" path components of
$\Diff(\mathbb{S}^1\times \mathbb{S}^2)$ given by compositions of isometries of $\mathbb{S}^1\times \mathbb{S}^2$ with a ``rotation" (generator) diffeomorphism (cf. \cite{Gl62})
\begin{equation}\label{tau}
\tau(z,w)=(z,\phi(z)w)
 \end{equation}
 where $\phi(z)w=(z\cdot (w_1,w_2),w_3)$ is a rotation of $w=(w_1,w_2,w_3)\in \mathbb{S}^2$ around the $w_3$-axis through the angle of
 $z\in\mathbb{S}^1$. We can finally produce only one $4$-manifold from these four isotopy classes of gluing diffeomorphisms since as mentioned before,
 all isometries of $\mathbb{S}^1\times \mathbb{S}^2$ can be radially extended to diffeomorphisms of the trivial disk bundle $D_0(\mathbb{S}^2)=\mathbb{S}^2\times D^2$ and consequently their
 compositions with $\tau$ as gluing maps give rise to the same $4$-manifold as that by $\tau$ itself. It thus suffices to prove
 \begin{equation}\label{tau diffeom}
 N=\mathbb{S}^2\times D^2\bigcup_{\tau}-(\mathbb{S}^2\times D^2)\cong \mathbb{CP}^2\# -\mathbb{CP}^2.
 \end{equation}
Now we regard $N$ as given by first cutting out the sphere-factors along their equators, then gluing the disk-factors pointwise along the north and south hemispheres, at last regluing these two
hemispheres back using the transformation map $\tau_0: \mathbb{S}^1\rightarrow SO(2)$ induced from $\tau$. Then it turns out to be an oriented $\mathbb{S}^2$ bundle over
$\mathbb{S}^2$ with the reduced structural group $SO(2)\subset SO(3)$ corresponding to the element $[\tau_0]=1\in\mathbb{Z}\cong \pi_1SO(2)$. Hence $N\cong \mathbb{CP}^2\# -\mathbb{CP}^2$ as before (cf. \cite{St51}).

The proof is now complete.
\end{proof}

As a corollary, we conclude the following differential classification of singular Riemannian foliations of codimension one on closed simply connected $4$-manifolds. Note that by the result of \cite{QT13} introduced in subsection \ref{SS:SRF}, these singular Riemannian foliations can become isoparametric with only the bundle-like Riemannian metrics on the $4$-manifolds modified.
\begin{cor}\label{SRFcodim1}
Let $N$ be a closed   simply connected $4$-manifold admitting a singular Riemannian foliation $\fol$ of codimension one with regular leaf $M$ and two singular leaves $M_{\pm}$.
Then the following Table \ref{table-SRF of codim1} gives a foliated diffeomorphism classification of $(N, \fol)$ with $\fol$ in terms of $(M,M_{\pm})$:
\begin{table}[!htb]
\caption{SRF of codim 1 on closed simply connected $4$-manifolds}\label{table-SRF of codim1}
\begin{tabular}{|c|c|c|c|c|c|}
\hline
\multirow{2}{*}{$N$} & \multicolumn{2}{|c|}{$\fol$} & \multicolumn{3}{|c|}{Properties} \\
\cline{2-6}
& $M$ & $M_{\pm}$ & Homog & T-Isopar & Isopar\\
\hline
\multirow{3}{*}{$\sphere^4$}  & $L(1,1)$ &  $pt$, $pt$ &\multirow{6}{*}{Yes} & \multirow{9}{*}{Yes} & \multirow{10}{*}{Yes}\\
\cline{2-3}
& $L(0,1)$ &  $\sphere^1$, $\sphere^2$ &&&\\
\cline{2-3}
& $SO(3)/(\mathbb{Z}_2\oplus\mathbb{Z}_2)$ &  $\mathbb{RP}^2$, $\mathbb{RP}^2$ &&&\\
\cline{2-3}
\cline{1-3}
\multirow{2}{*}{$\mathbb{CP}^2$}  & $L(1,1)$ &  $pt$, $\sphere^2$ & & &\\
\cline{2-3}
& $L(4,1)$ &  $\mathbb{RP}^2$, $\sphere^2$ &&&\\
\cline{2-3}
\cline{1-3}
$\sphere^2\times\sphere^2$ & $L(2m,1)$,   $m\geq 0$ &  $\sphere^2$, $\sphere^2$ &  & &\\
\cline{1-4}
\multirow{2}{*}{$\mathbb{CP}^2\# \mathbb{CP}^2$}  & $L(1,1)$ &  $\sphere^2$, $\sphere^2$ &\multirow{2}{*}{No} &&\\
\cline{2-3}
& $L(2,1)$ &  $\sphere^2$, $\sphere^2$ &&&\\
\cline{2-3}
\cline{1-4}
\multirow{2}{*}{$\mathbb{CP}^2\# -\mathbb{CP}^2$}  & $L(2m+1,1)$,   $m\geq0$  &  $\sphere^2$, $\sphere^2$ & Yes &&\\
\cline{2-5}
& $L(0,1)$ &  $\sphere^2$, $\sphere^2$ & No & Unknown &\\
\cline{2-3}
\hline
\end{tabular}
\end{table}\\
where the column ``Homog" (resp. ``T-Isopar", ``Isopar") means whether there exist a homogeneous (resp. totally isoparametric, isoparametric) representative in the foliated diffeomorphism class.
\end{cor}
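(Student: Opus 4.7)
The plan is to leverage Theorem~\ref{splitting thm} and its case-by-case proof, reading off the admissible triples $(M,M_\pm)$ from the cohomology exact sequence~\eqref{exact cohom seq}, and matching gluings via the $\pi_0$-part of the Smale conjecture already used above.

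First I would reduce to the splitting setting. For any codimension-one singular Riemannian foliation $\fol$ on a closed simply connected $4$-manifold $N$, the leaf space $N/\fol$ is a compact one-dimensional Hausdorff length space, hence either $\mathbb{S}^1$ or a compact interval. The circle case is excluded because every regular leaf would then be the fiber of a submersion $N\to\mathbb{S}^1$, yielding an epimorphism $\pi_1(N)\to\mathbb{Z}$ and contradicting simple connectedness. Hence $N/\fol=[0,1]$, its two endpoints carry the singular leaves $M_\pm$, and the preimages of $[0,1/2]$ and $[1/2,1]$ are the disk bundles $D(M_\pm)$ glued along a regular leaf $M$. This is the splitting structure~\eqref{splitting str}, so Theorem~\ref{splitting thm} (together with the homotopy-sphere version from \cite{GT12} when $b_2=0$) identifies $N$ with one of the five standard elliptic $4$-manifolds.

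Next, for each such $N$ I would enumerate the admissible triples. The values of $b_2(N)$ and $H^1(N,\mathbb{Z}_2)$, plugged into~\eqref{exact cohom seq}, determine $H^*(M_\pm,\mathbb{Z}_2)$ and hence the diffeomorphism types of the low-dimensional manifolds $M_\pm$; this is exactly the argument of Subsections~3.1--3.3, together with its $b_2=0$ analogue for $N=\mathbb{S}^4$. For each admissible $(M_+,M_-)$, the regular leaf $M$ must simultaneously be the total space of sphere bundles over $M_+$ and over $M_-$, which pins $M$ down to a lens space $L(m,1)$ with $m$ constrained by the self-intersection numbers of $M_\pm$, or to $L(0,1)=\mathbb{S}^1\times\mathbb{S}^2$, or to $SO(3)/(\mathbb{Z}_2\oplus\mathbb{Z}_2)$ in the $(\mathbb{RP}^2,\mathbb{RP}^2)$ entry for $\mathbb{S}^4$. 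This produces precisely the rows of Table~\ref{table-SRF of codim1}. Uniqueness of the foliated diffeomorphism class for each triple follows by the same extension-of-gluing argument used for~\eqref{gluingdiffeom}: two gluing diffeomorphisms $f,f'\in\Diff(M)$ in the same isotopy class (and, in the $m=0$ case, differing at worst by the Gluck twist~\eqref{tau}) extend radially to foliated diffeomorphisms of the disk bundles by Remark~\ref{pi0part}.

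The last step, which I expect to be the main obstacle, is populating the \emph{Homog}, \emph{T-Isopar} and \emph{Isopar} columns. The \emph{Homog} entries are realized by the explicit cohomogeneity-one actions in Parker's classification plus the classical Cartan isoparametric hypersurface on $\mathbb{S}^4$; the \emph{Isopar} entries are then obtained from \cite{QT13} by modifying the bundle-like metric. The delicate point is the existence and nature of the three non-homogeneous rows (two on $\mathbb{CP}^2\#\mathbb{CP}^2$ and the $L(0,1)$-row on $\mathbb{CP}^2\#-\mathbb{CP}^2$): no cohomogeneity-one action is available, so I would construct these foliations by hand, taking for $\mathbb{CP}^2\#\mathbb{CP}^2$ the two pairs of embedded $\mathbb{CP}^1\#\mathbb{CP}^1$ spheres produced in the proof of the Proposition of Subsection~3.3 and verifying that tubular neighborhoods assemble into an SRF, and for the $L(0,1)$-row a Gluck-twisted variant of the trivial $\mathbb{S}^2$-bundle foliation. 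Checking that these are genuine (totally) isoparametric SRFs, or flagging \emph{T-Isopar} as Unknown in the last row as indicated, is where the bulk of the remaining effort lies.
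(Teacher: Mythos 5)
Your reduction to the splitting setting, the enumeration of admissible triples $(M,M_\pm)$ via the exact sequence, the use of radial extension and the $\pi_0$-part of the Smale conjecture for uniqueness, and the identification of the homogeneous rows with Parker's list are all in line with the paper's argument, and the ``Isopar'' column via \cite{QT13} is handled identically.

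The genuine gap is the \emph{T-Isopar} column on the two non-homogeneous rows of $\mathbb{CP}^2\#\mathbb{CP}^2$. The table asserts ``Yes'' there, and that assertion requires a construction, not merely ``verifying that tubular neighborhoods assemble into an SRF.'' Your proposed route---taking tubular neighborhoods of embedded $\mathbb{CP}^1\#\mathbb{CP}^1$ spheres inside $\mathbb{CP}^2\#\mathbb{CP}^2$---produces the correct foliated diffeomorphism type but gives no control on the principal curvatures of the regular leaves, which is what ``totally isoparametric'' demands. The paper instead starts from an \emph{invariant} metric $\tilde{g}_1$ on the homogeneous foliation $(\mathbb{CP}^2\#-\mathbb{CP}^2,\fol'=(L(1,1),\sphere^2,\sphere^2))$, cuts out the two foliated disk bundles $D_1(\sphere^2)$ together with the restricted metric, and reglues them along $L(1,1)=\sphere^3$ by an \emph{orientation-reversing isometry} $f_1$ of $(\sphere^3,\tilde g_1|_{\partial D_1(\sphere^2)})$. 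Because $f_1$ is an isometry, the two metric pieces match to a smooth bundle-like metric $g_1$ on $\mathbb{CP}^2\#\mathbb{CP}^2$, and each regular leaf inherits constant principal curvatures from the homogeneous source, giving the totally isoparametric representative for $\fol_1=(L(1,1),\sphere^2,\sphere^2)$. The same device with $(\sphere^2\times\sphere^2,\fol''=(L(2,1),\sphere^2,\sphere^2))$ handles $\fol_2=(L(2,1),\sphere^2,\sphere^2)$. This isometric-gluing step is what you would need to supply.

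It is also worth making explicit \emph{why} the last row $(\mathbb{CP}^2\#-\mathbb{CP}^2,(L(0,1),\sphere^2,\sphere^2))$ is flagged ``Unknown'' for T-Isopar rather than settled by the same trick: the required gluing map is the rotation $\tau$ of \eqref{tau}, which does \emph{not} lie in $\Iso(\sphere^1\times\sphere^2)$, so the metric pieces fail to match isometrically. Proposition \ref{non-t-isop} then rules out the natural pull-back construction. Your proposal lumps all three non-homogeneous rows together under ``the bulk of the remaining effort,'' but the asymmetry---isometric gluing available for $\mathbb{CP}^2\#\mathbb{CP}^2$, unavailable for the $L(0,1)$ row---is precisely the content that justifies the Yes/Yes/Unknown pattern in the table, and it needs to appear in the proof.
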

\begin{proof}
The classification is clear by the proof of Theorem \ref{splitting thm} and that of \cite[Thm. 1.1]{GT12} for $\sphere^4$, where the equivalence of the foliations with the same $(M,M_{\pm})$ follows from the consideration on isotopy classes of the gluing diffeomorphisms and the observation that all possible extensions of the gluing diffeomorphisms can be made radially (so to preserve the leaves). As for the homogeneity, one can compare with the classification of cohomogeneity one $4$-manifolds in \cite{Pa86}\footnote{In \cite{Pa86} the
second case in $\mathbb{CP}^2$ was missing, as remarked also in \cite{GZ11, Ho10}. Note also that it should be $\mathbb{CP}^2\#-\mathbb{CP}^2$ other than $\mathbb{CP}^2\#\mathbb{CP}^2$, as the covering space of the manifolds $43$ and $49$ in Parker's list.}. Naturally, the $7$ homogeneous foliations are totally isoparametric.
The $2$ nonhomogeneous foliation classes on $\mathbb{CP}^2\# \mathbb{CP}^2$ can also be represented by totally isoparametric foliations as follows. Fix an invariant metric $\tilde{g}_1$ for the homogeneous foliation $(\mathbb{CP}^2\#-\mathbb{CP}^2, \fol'=(L(1,1), \sphere^2,\sphere^2))$. This induces a singular Riemannian foliation representing $(\mathbb{CP}^2\#\mathbb{CP}^2, \fol_1=(L(1,1), \sphere^2,\sphere^2))$, by gluing the homogeneous foliations of the two disk bundles $D_1(\sphere^2)$ in $(\mathbb{CP}^2\#-\mathbb{CP}^2, \fol')$ through an orientation-reversing isometry $f_1\in \Iso(\sphere^3)$ (with respect to the metric induced from $\tilde{g}_1$), with the bundle-like metric $g_1$ on $(\mathbb{CP}^2\#\mathbb{CP}^2, \fol_1)$ also glued from the restrictions of $\tilde{g}_1$ to the two disk bundles $D_1(\sphere^2)$ by the isometry $f_1$. The metric $g_1$ is smooth and well-defined because the gluing map $f_1$ is an isometry with respect to $\tilde{g}_1|_{\partial D_1(\sphere^2)}$. It follows that each regular leaf $L(1,1)$ of $\fol_1$ has constant principal curvatures and thus $(\mathbb{CP}^2\#\mathbb{CP}^2, g_1, \fol_1)$ is totally isoparametric. The same argument is applicable to the case $(\mathbb{CP}^2\#\mathbb{CP}^2, \fol_2=(L(2,1), \sphere^2,\sphere^2))$ with the homogeneous foliation $(\sphere^2\times\sphere^2, \fol^{''}=(L(2,1),\sphere^2,\sphere^2))$ taking place of $(\mathbb{CP}^2\#-\mathbb{CP}^2, \fol')$. The last nonhomogeneous foliation class $(\mathbb{CP}^2\# -\mathbb{CP}^2,\fol=(L(0,1),\mathbb{S}^2, \mathbb{S}^2))$ can be represented by an isoparametric foliation, either by the result of \cite{QT13} cited above, or simply using the isoparametric foliation obtained from the pull-back of the standard isoparametric foliation on $\sphere^2$ through a Riemannian submersion $\mathbb{CP}^2\# -\mathbb{CP}^2\rightarrow \sphere^2$ with totally geodesic $\sphere^2$-fibres (cf. \cite{GT12}). We remark that one can not use directly the argument above for $(\mathbb{CP}^2\#\mathbb{CP}^2, \fol_1)$ to obtain a totally isoparametric representative in this case, since the gluing map now does not belong to the isometry group of $\sphere^1\times\sphere^2$.
\end{proof}
Note that we have found two nonhomogeneous examples of totally isoparametric foliations on $\mathbb{CP}^2\# \mathbb{CP}^2$ while such foliations were guessed to be homogeneous (cf. \cite{GTY11}). However, we do not know whether there exists any totally isoparametric representative in the last nonhomogeneous foliation class $(\mathbb{CP}^2\# -\mathbb{CP}^2,\fol=(L(0,1),\mathbb{S}^2, \mathbb{S}^2))$. Observing that now $\mathbb{S}^2$ represents the homology class $\pm(1,-1)\in H_2(\mathbb{CP}^2\# -\mathbb{CP}^2)$, we see that this question relates with the \emph{homology representation} (\emph{minimal genus problem}) (cf. \cite{La97, LL98}) with further geometric restrictions: the complement of the embedding sphere is an open disk bundle over another embedding sphere, and every tubular hypersurface of either embedding sphere has constant principal curvatures under some Riemannian metric on $\mathbb{CP}^2\# -\mathbb{CP}^2$. Note moreover that the latter curvature condition forces both embedding spheres to have constant principal curvatures which are independent of the choice of unit normal vectors (cf. \cite{GT13}). Nevertheless,
the answer to this question seems to be negative in view of the following observation motivated by the pull-back construction in the proof above.
\begin{prop}\label{non-t-isop}
There exists no totally isoparametric foliation $\fol=(\sphere^1\times\sphere^2, \sphere^2, \sphere^2)$ on $\mathbb{CP}^2\# -\mathbb{CP}^2$ that can be projected to a singular Riemannian foliation on $\sphere^2$ via a Riemannian submersion $\pi:\mathbb{CP}^2\# -\mathbb{CP}^2\rightarrow \sphere^2$ with totally geodesic fibres.
\end{prop}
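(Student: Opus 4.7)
The plan is to argue by contradiction: assume such a totally isoparametric foliation $\fol$ and Riemannian submersion $\pi$ exist, and derive that the horizontal distribution of $\pi$ must be integrable, forcing the bundle to be trivial over the simply connected base $\sphere^2$ and thus contradicting $N\cong\mathbb{CP}^2\#-\mathbb{CP}^2$.

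First I would identify the induced foliation $\bar{\fol}$ on the base. Since $\pi$ is a Riemannian submersion and $\fol$ has codimension one, $\bar{\fol}$ is a codimension one singular Riemannian foliation on $\sphere^2$; up to foliated diffeomorphism the only such foliation is the standard latitudinal one with two singular leaves $p_\pm\in\sphere^2$ and latitudes $C_r$ as regular leaves. Hence each singular leaf $F_\pm\cong\sphere^2$ of $\fol$ projects to a point $p_\pm$. Since $\dim F_\pm=2=\dim\pi^{-1}(p_\pm)$, we have $F_\pm=\pi^{-1}(p_\pm)$, so the $F_\pm$ are totally geodesic fibres, and every regular leaf has the form $L_r=\pi^{-1}(C_r)$.

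Next I would compute the shape operator of $L_r$ in $N$ via O'Neill's formulas. At $q\in L_r$ take an orthonormal vertical frame $(V_1,V_2)$ of $T_qF_{\pi(q)}$ and let $e,\nu$ be the horizontal lifts at $q$ of the unit tangent and unit normal of $C_r$ in $\sphere^2$. Using that the fibres are totally geodesic, a direct computation shows that in the orthonormal basis $(V_1,V_2,e)$ of $T_qL_r$ the shape operator $S_\nu$ has matrix
\[
S_\nu\;=\;\begin{pmatrix}0 & 0 & -a_1\\ 0 & 0 & -a_2\\ -a_1 & -a_2 & \kappa(r)\end{pmatrix},
\]
where $\kappa(r)$ is the geodesic curvature of $C_r$ in $\sphere^2$ and $a_i=\langle A_e\nu,V_i\rangle$ are the components of the O'Neill $A$-tensor. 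The characteristic polynomial factors as $-\lambda(\lambda^2-\kappa(r)\lambda-|A_e\nu|^2)$, so the principal curvatures of $L_r$ are $0$ and the two roots of $\lambda^2-\kappa(r)\lambda-|A_e\nu|^2=0$. Since $\fol$ is totally isoparametric these must be constant on $L_r$; as $\kappa(r)$ already is, $|A_e\nu|^2$ must be constant on each $L_r=\pi^{-1}(C_r)$, and therefore on every fibre of $\pi$ (by varying $r$ and continuity at $p_\pm$).

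The crucial step is then to upgrade this constancy to $A\equiv 0$. A classical result for Riemannian submersions with totally geodesic fibres (see Besse, \emph{Einstein Manifolds}, Ch.~9) asserts that the restriction of $A_XY$ to any fibre, for $X,Y$ basic horizontal fields, is a Killing vector field on that fibre in its induced metric. In our setting $A_e\nu|_{\pi^{-1}(x)}$ is thus a Killing field on the $2$-sphere $\pi^{-1}(x)$, and by the Poincar\'e--Hopf theorem every vector field on $\sphere^2$ vanishes somewhere; combined with $|A_e\nu|$ being constant on the fibre this forces $A_e\nu\equiv 0$ on every fibre. Running this over all orthonormal horizontal frames yields $A\equiv 0$ on $N$, so the horizontal distribution is involutive; Frobenius then endows $\pi$ with a flat Ehresmann connection, which over the simply connected base $\sphere^2$ makes the bundle trivial, giving $N\cong\sphere^2\times\sphere^2$ and contradicting $N\cong\mathbb{CP}^2\#-\mathbb{CP}^2$. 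The main technical obstacle I anticipate is the bookkeeping in the shape operator computation and the justification of extending $e,\nu$ off $L_r$ as basic horizontal fields so that the Killing-field theorem applies; since the shape operator is pointwise, fixing such local extensions is routine and the argument closes.
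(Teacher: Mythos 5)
Your proof is correct and takes essentially the same route as the paper's: you compute the shape operator of a regular leaf (equivalent, via the relation $g(S(X),Y)=-H_F(X,Y)/|\nabla F|$, to the paper's computation of the Hessian of the transnormal function $F=f\circ\pi$), deduce that $|A_e\nu|^2$ is constant on each $\sphere^2$-fibre, and conclude $A\equiv 0$ by the constant-length/Poincar\'e--Hopf argument, contradicting $N\cong\mathbb{CP}^2\#-\mathbb{CP}^2$. Two minor remarks: writing $\kappa(r)$ tacitly assumes the geodesic curvature of $C_r$ is constant along $C_r$, which is not automatic for a merely transnormal $f$ but, as the paper observes, is itself forced by the constant-eigenvalue condition (it is the condition that $f$ be isoparametric); and the Killing-field property of $A_e\nu$ on fibres, while correct, is an unneeded extra since constant length plus Poincar\'e--Hopf already suffices.
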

\begin{proof}
Let $\pi:(N=\mathbb{CP}^2\# -\mathbb{CP}^2,g)\to (\sphere^2,g')$ be a Riemannian submersion with totally geodesic fibers, and let $\fol'=(\sphere^1,pt,pt)$ be a codimension 1 singular Riemannian foliation on $(\sphere^2, g')$. The pull-back foliation $\fol=\pi^{-1}(\fol')$, obtained by the preimages of the leaves in $\fol'$, is a singular Riemannian foliation on $(N,g)$, and suppose that any regular leaf $L=\pi^{-1}(\sphere^1)\cong \sphere^1\times \sphere^2$ of $\fol$ has constant principal curvatures in $N$.

As mentioned in Subsection \ref{SS:SRF}, the foliations $\fol,\fol'$ consist of level sets of some (non-unique) functions $F, f$ on $N,\sphere^2$, respectively, where $F=f\circ\pi$ is totally isoparametric on $(N,g)$ and $f$ is transnormal on $(\sphere^2, g')$. In fact, for our purpose it suffices to let $F$, $f$ be the distance functions to one of the singular leaves, which are well-defined and smooth on the regular parts $N_0=N- (\sphere^2\sqcup\sphere^2)=\pi^{-1}(N'_0)$, $N'_0=\sphere^2-(pt\sqcup pt)$ of the foliations $\fol, \fol'$. By the relation between the shape operator $S$ on $L$ with respect to the unit normal vector field $\xi:=\nabla F/|\nabla F|$ (normalized gradient) and the Hessian $H_F$ of $F$ on $(N,g)$, namely
\[
g(S(X),Y)=-H_F(X,Y)/|\nabla F|, \quad for~X,Y\in TL,
\]
$\fol$ is totally isoparametric if and only if $F$ is transnormal, i.e., $|\nabla F|^2=b(F)$ for some function $b:\mathbb{R}\rightarrow\mathbb{R}$, and $H_F|_{TL}$ has constant eigenvalues on any regular leaf $L$.

Let $L'\cong\sphere^1$ be any regular leaf in $(\sphere^2,g',\fol')$ and $L=\pi^{-1}(L')$. Let $e$ be the basic horizontal unit vector field on $L$ that projects to the (oriented) unit vector field on $L'$, and let $\{u,v\}$ be a local vertical orthonormal frame on $L$, such that $\{e,u,v\}$ is a local orthonormal frame on $L$. Notice that the basic horizontal vector fields $e,\xi$ are well-defined on the whole of the regular part $N_0$.
Recall that the O'Neill's integrability tensor $A$ acts skew-symmetrically on the horizontal orthonormal frame $\{e,\xi\}$ as
\[
A_e\xi=g(\nabla_e\xi, u)u+g(\nabla_e\xi, v)v,
\]
where $\nabla$ denotes the covariant derivative on $(N,g)$. This tensor measures the obstruction to integrability of the horizontal distribution.
When the fibres are totally geodesic, $A\equiv0$ if and only if the total space $N$ is locally a Riemannian product of the base manifold and the fiber (see more details and properties of the $A$-tensor in, e.g., \cite{Ge14}). In particular, in our case the $A$-tensor cannot vanish identically on $N$ since otherwise $N$ would have to split isometrically as $\sphere^2\times\sphere^2$.

Straightforward calculations show that under the orthonormal frame $\{e,u,v\}$,
\[
H_F|_{TL}=\left(\begin{array}{ccc} H_F(e,e) & |\nabla F|g(\nabla_e\xi, u) & |\nabla F|g(\nabla_e\xi, v) \\ |\nabla F|g(\nabla_e\xi, u)  & 0&0\\|\nabla F|g(\nabla_e\xi, v)&0&0
    \end{array}\right).
\]
Therefore, $H_F|_{TL}$ has constant eigenvalues if and only if $H_F(e,e)=\Delta f\circ\pi$ is constant (i.e., $f$ is isoparametric) and $|\nabla F|^2|A_e\xi|^2=|\nabla F|^2(g(\nabla_e\xi, u)^2+g(\nabla_e\xi, v)^2)$ is constant on $L$. Then $|A_e\xi|^2$ is constant on $L$ because $|\nabla F|^2=b(F)\neq0$ is constant on $L$. Noticing that $A_e\xi$ is now a global tangent vector field of constant length along each $\sphere^2$-fibre in $L$, we conclude that $A\equiv0$ on $L$ and hence on $N_0$. By continuity $A\equiv0$ on $N=\overline{N_0}$ which gives the contradiction as described in the preceding paragraph.
\end{proof}
\subsection{Further remark on the 3 non-homogeneous foliations}\label{3-non-homog}
To conclude this section, we would like to remark further on the exactly 3 non-homogeneous singular Riemannian foliations of codimension one in Table \ref{table-SRF of codim1} of Corollary \ref{SRFcodim1}.
In fact, these non-homogeneous foliations all arise by somewhat ``twisted" gluing of two copies of a homogeneous foliation on a disk bundle.

Explicitly,
$(\mathbb{CP}^2\# -\mathbb{CP}^2,\fol=(L(0,1)=\sphere^1\times\sphere^2,\mathbb{S}^2, \mathbb{S}^2))$ can be regarded as the pull-back of the homogeneous foliation of $\sphere^2$ by concentric circles, via the $\sphere^2$-bundle projection $\mathbb{CP}^2\# -\mathbb{CP}^2\rightarrow\sphere^2$ as in Proposition \ref{non-t-isop}.  This can thus also be seen as a twisted gluing of the homogeneous foliations on the two trivial $\sphere^2$-bundles over the south and north hemispheres, i.e., on the trivial disk bundle $\sphere^2\times D^2$. The twisted gluing diffeomorphism is nothing but the non-isometric ``rotation" $\tau$ in (\ref{tau}) (under appropriate choices of orientations of the trivial disk bundles and possibly some isotopy), which proves directly the diffeomorphism (\ref{tau diffeom}) and moreover, implies the ``failure" (Proposition \ref{non-t-isop}) of gluing two copies of the homogeneous foliation into a totally isoparametric foliation since $\tau$ is not isometric. This also gives an interesting example of non-homogeneous foliation $(M,\fol)$ that projects, via a foliated map $M\rightarrow N$, to a homogeneous foliation on $N$. Conversely, examples of homogeneous foliations that project to non-homogeneous foliations were rather well-known on the Hopf fibrations $\sphere^{2n+1}\rightarrow\mathbb{CP}^n$ (see for example \cite{GTY11}).

This ``twisted" gluing phenomenon occurs similarly for the other two non-homogeneous foliations on $\mathbb{CP}^2\#\mathbb{CP}^2$, but with isometric twisted (orientation-reversing) gluing diffeomorphisms instead, which ensures the success of the gluing into totally isoparametric foliations. One is glued from two copies of the (unique) homogeneous foliation on $D_1(\sphere^2)$, the disk bundle over $\sphere^2$ with Euler number 1 along the boundary $L(1,1)=\sphere^3$. The other is glued from two copies of the (unique) homogeneous foliation on $D_2(\sphere^2)$, the disk bundle over $\sphere^2$ with Euler number 2 along the boundary $L(2,1)=\mathbb{RP}^3$.

Notice that there are neither non-isometric, nor orientation-reversing isotopy classes of diffeomorphisms on the lens spaces $L(m,1)$ for $m>2$. This explains somewhat why there are only these 3 ``twisted" gluing cases and hence non-homogeneous classes.

\section{Singular Riemannian foliations of general codimension}\label{Proof of SRF thm}
In this section we first prove a conjecture of Molino for $4$-manifolds which reduces the objects to closed singular Riemannian foliations. Then we  prove Theorem \ref{SRF thm} by verifying the only remaining case of closed $2$-dimensional singular Riemannian foliations.

Let $(M,\fol)$ be a singular Riemannian foliation. Recall that $\ol{\fol}$ is defined as the
partition of $M$ given by the closures of the leaves in $\fol$. It is a transnormal system, i.e. the leaves are locally at a constant distance from each other. Moreover, Molino \cite{Mol88} proved that in the regular part of $\fol$, $\ol{\fol}$ is a singular Riemannian foliation, and he conjectured that $\ol{\fol}$
is actually a singular Riemannian foliation on the whole of $M$. In the following proposition, we show that Molino's conjecture holds for $4$-manifolds,
and therefore $\ol{\fol}$ is a closed singular Riemannian foliation.
\begin{prop}
Molino's conjecture holds for $4$-manifolds.
\end{prop}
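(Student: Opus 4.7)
The strategy is to reduce Molino's conjecture on a $4$-manifold to Molino's conjecture for singular Riemannian foliations on normal spheres of dimension at most three. By Molino's original theorem, $\overline{\fol}$ is already a singular Riemannian foliation on the regular stratum $M_0$, so the smooth-spanning condition for $\overline{\fol}$ needs to be verified only at points of the singular strata. Fix a singular leaf $L_p$ through $p$. By the local description recalled in Subsection~\ref{SS:holonomy}, a distinguished tubular neighborhood $U$ of $L_p$ is identified, after lifting to the universal cover $\tilde L_p$ and quotienting by the holonomy action of $\pi_1(L_p)$, with $\tilde L_p \times_{\pi_1(L_p)} D$, where $D$ is an $\epsilon$-disk in $\nu_p L_p$ carrying the cone foliation over $(\sphere_p, \fol_p)$. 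The closure partition $\overline{\fol}|_U$ is identified accordingly with $\tilde L_p \times_{\pi_1(L_p)} C(\overline{\fol_p})$, so showing that $\overline{\fol}$ is a singular (Riemannian) foliation near $L_p$ reduces to showing the same for $\overline{\fol_p}$ on $\sphere_p$. Since $\codim L_p \geq 2$, one has $\dim \sphere_p \leq 3$.

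I would then establish Molino's conjecture for every SRF on $\sphere^d$ with $d \leq 3$ by induction on $d$. The base $d \leq 2$ is elementary: direct classification shows that every SRF on $\sphere^d$ with $d \leq 2$ has closed leaves (the only nontrivial case being the foliation by concentric circles on $\sphere^2$ with two focal points), so $\overline{\fol_p} = \fol_p$ is itself a SRF. For $d = 3$ I split on $\dim \fol_p$. The cases $\dim \fol_p = 0$ (trivial point partition) and $\dim \fol_p = 3$ (whole sphere as one leaf) are immediate. For $\dim \fol_p = 2$, i.e.\ codimension-one SRFs on the simply connected manifold $\sphere^3$, one has a defining transnormal function, and hence all leaves are closed. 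The remaining case $\dim \fol_p = 1$ is where the conjecture has genuine content, since leaves of $\fol_p$ can fail to be closed (an irrational linear flow on a Hopf torus being the prototype).

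For this last case, Molino's theorem applied to the regular stratum of $(\sphere^3, \fol_p)$ already furnishes a SRF of leaf closures on the regular part, which by codimension considerations is the complement in $\sphere^3$ of a disjoint union of isolated points and circles. I would then extend this SRF structure smoothly across those singular strata: at each singular leaf $\Lambda \subseteq \sphere^3$ of $\fol_p$ the infinitesimal foliation of $\fol_p$ along $\Lambda$ lives on $\sphere^1$ or $\sphere^2$, where by the inductive hypothesis Molino's conjecture is available, and the slice theorem near $\Lambda$ transports this into the required smooth spanning of leaf closures across $\Lambda$. The main obstacle is precisely this final extension: one must confirm that the two-torus leaf closures produced on the regular stratum assemble smoothly into a SRF through the lower-dimensional singular strata of $\fol_p$, a compatibility that relies decisively on the inductive description of the infinitesimal foliations combined with the product structure supplied by the slice theorem.
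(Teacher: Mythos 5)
Your proposal shares with the paper the idea of zooming in on the slice/infinitesimal foliation at singular points, but the two arguments diverge substantially, and your version has two genuine gaps.

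First, the reduction step asserting that $\overline{\fol}\,|_U$ is identified with $\tilde L_p\times_{\pi_1(L_p)} C(\overline{\fol_p})$ is not justified, and it is essentially where all the content of the conjecture lives. A leaf $L_q$ of $\fol$ is a \emph{global} object: even if $L_q\cap U$ is a union of cone leaves $r\,\mathcal L_{x_i}$ over leaves of $\fol_p$, that union may be infinite (the leaf can re-enter $U$ arbitrarily many times), and the closure of an infinite union of cone closures need not be a single cone over a leaf of $\overline{\fol_p}$. Moreover the product local model $U=\tilde L_p\times_{\pi_1(L_p)} D$ quoted from Subsection~\ref{SS:holonomy} is stated there for a \emph{regular} leaf $L_p$; for singular leaves (and in particular for singular leaves of dimension $\geq 1$, which occur when $\dim\fol\in\{2,3\}$ and which need not even be closed) the slice description is more delicate and you cannot just transport the closure through it. So the reduction from $(M,\fol)$ to $(\sphere_p,\fol_p)$, which you treat as a known fact, is itself the hard part and must be argued.

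Second, and independently, you never close the key case $\dim\fol_p=1$ on $\sphere^3$. You correctly identify it as the case of genuine content, but the appeal to the inductive hypothesis on $\sphere^2$ does not help: the $1$-dimensional infinitesimal foliation of $\fol_p$ at a singular point of $\sphere^3$ lives on $\sphere^2$ where all leaves are already closed, so it carries no information about whether the $2$-dimensional closure leaves of $\fol_p$ (the tori that appear on the regular part of $\sphere^3$) collapse smoothly onto the singular point. You explicitly label this extension ``the main obstacle,'' but it is left unresolved, so the induction does not actually terminate. The paper avoids the whole issue by a different argument: it first disposes of $\dim\fol=2,3$ by noting that in a $4$-manifold the infinitesimal foliations are all polar, hence $\fol$ is \emph{infinitesimally polar} and Molino's conjecture is already known for that class by Alexandrino--Lytchak \cite{AL11}; for $\dim\fol=1$ the singular leaves are points, the metric ball around such a point is foliated diffeomorphic to the orbit decomposition of a representation $\RR\to O(4)$ (using the classification of low-dimensional singular Riemannian foliations on spheres), and the closure of such an $\RR$-action is the orbit foliation of a torus $T^2\to O(4)$, which is manifestly a singular foliation. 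This homogeneity argument is exactly what fills the hole you left open, and is not present in your plan.
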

\begin{proof}
If $\dim \fol=2,3$, it is easily seen that for every point $p\in M$, the infinitesimal foliation $(\sphere_p,\fol_p)$ either consists of points (if $p$ is regular), or it consists of one leaf, or it is a foliation of codimension one. All these foliations are polar, i.e., the leaf space $\sphere_p/\fol_p$ is isometric to a Riemannian orbifold, and this makes $\fol$ \emph{infinitesimally polar}. Molino's conjecture is known to hold for such foliations \cite{AL11} and thus one only needs to prove it for 1-dimensional singular Riemannian foliations.

We need to prove that there is a family of smooth vector fields $\{X_i\}$ such that, for each point $p\in M$, the tangent space of the leaf $\ol{L_p}$ of $\ol{\fol}$ through
$p$ is the span of the vectors $\{X_i( p)\}$. Notice that this is a local condition. Moreover, Molino himself proved that this condition is satisfied
around regular points of $\fol$, so we only have to prove that the condition holds around the singular leaves of $\fol$.

Since $\dim \fol=1$, the singular leaves are just points and in particular they are closed. Moreover, a metric ball around each singular leaf is foliated diffeomorphic to the orbit decomposition of a representation $\RR\to O(4)$. The closure of such actions is well known to be
homogeneous, and more precisely given by the action of a torus $T^2\to O(4)$. In particular, the closure of $\fol$ around the singular leaves of
$\fol$ is a singular foliation, which is what we wanted to prove.
\end{proof}

From the proposition above, if a $4$-manifold $M$ admits a singular Riemannian foliation $\fol$ then it also admits a closed singular Riemannian foliation $\ol{\fol}$. Moreover,
if $M$ is simply connected and closed, its Euler characteristic is positive and by \cite{Gh84} there is a compact leaf. In particular, the leaves of $\fol$ cannot be dense in $M$, and $\ol{\fol}$ is a nontrivial closed foliation.

In what follows, we will assume that $(M, \fol)$ is a closed, nontrivial singular Riemannian foliation on a closed simply connected $4$-manifold.
As introduced in Section \ref{introduction}, Theorem \ref{SRF thm} has been proven by Galaz-Garcia and the second named author [18, Cor. 8.6] for closed foliations of dimension $1$, by showing that such a foliation comes from a smooth effective circle action and then applying Fintushel's classification of 4-manifolds with a circle action [13, 14]. For foliations of dimension $3$ Theorem \ref{SRF thm} reduces to Theorem \ref{splitting thm}, and therefore we
are left to study the case when $\dim\fol=2$. Now the codimension of $\fol$ is $2$, and by \cite{Ly10} the leaf space $M^*$ is a simply connected orbifold, which has no boundary if and only if $\fol$ is a regular foliation. According to this criterion we prove the theorem separately in the following two subsections.
\subsection{}
If the foliation is regular, the leaf space $M^*$ is an orbifold without boundary. On the one hand we know that the projection $\pi:M\to M^*$ induces a surjection $\pi_1(M)\to \pi_1(M^*)$, and therefore $M^*$ must be topologically a $2$-sphere. On the other hand, by \cite[Cor. 5.3]{Ly10} the orbifold fundamental group of $M^*$ is also trivial, and therefore $M^*$ is an orbifold $2$-sphere $\sphere^2(p,q)$ with at most $2$ orbifold points of coprime order $p,q$. In particular, it is possible to write $M^*$ as a union of $2$ disks around the orbifold points, and the preimages of these disks decompose $M$ into a union of two disk
bundles along possibly exceptional leaves, which proves the result by Theorem \ref{splitting thm}. In this case it is actually possible to know more about which manifolds come up as follows.

\begin{prop}
If $(M,\fol)$ is a (regular) closed Riemannian foliation of dimension $2$ on a compact simply connected $4$-manifold $M$, then the leaves are the fibers of a $\sphere^2$-bundle over $\sphere^2$. In particular, $M$ is diffeomorphic to $\sphere^2\times\sphere^2$ or $\CC\PP^2\#-\CC\PP^2$.
\end{prop}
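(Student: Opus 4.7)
The plan is to combine a Seifert-fibration reduction with an Euler characteristic count and an orientability argument in order to rule out all exceptional leaves, and then invoke the classification of $\sphere^2$-bundles over $\sphere^2$.

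First I would upgrade the leaf projection $\pi:M\to M^*=\sphere^2(p,q)$ to a genuine Seifert fibration, using the tubular neighborhood description of Subsection~\ref{SS:holonomy}: there is a single diffeomorphism type $F$ for the principal (generic) leaf, constant on the open dense connected principal stratum, and over each orbifold point of order $k$ sits exactly one exceptional leaf diffeomorphic to $F/\Gamma$ with $\Gamma\cong\ZZ/k$ acting freely on $F$ through the holonomy representation. Freeness of $\Gamma$ uses regularity of $\fol$. This is the step I expect to take the most care: one must pass from the local/infinitesimal description near an exceptional leaf to a global Seifert picture, and verify that the principal-leaf diffeomorphism type is truly globally constant.

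Second, I would compute $\chi(M)$ by splitting $M$ into the open principal stratum $M_0$ (an honest $F$-bundle over a $2$-sphere with at most two points removed) and the at most two exceptional leaves $F/\ZZ/p,\ F/\ZZ/q$. Additivity of the Euler characteristic then gives
\[
\chi(M)=\chi(F)\Bigl(\tfrac{1}{p}+\tfrac{1}{q}\Bigr).
\]
Because $M$ is a closed simply connected $4$-manifold, $\chi(M)=2+b_2(M)\geq 2>0$, so $\chi(F)>0$ and hence $F\cong\sphere^2$ or $\mathbb{RP}^2$. Then I would invoke orientability: since $\pi_1(M)=1$ one has $H^1(M;\ZZ/2)=0$, so in the orthogonal splitting $TM=T\fol\oplus\nu$ both summands have vanishing $w_1$, and in particular $TL=T\fol|_L$ is orientable for every leaf $L$. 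This rules out $F\cong\mathbb{RP}^2$ and also kills the antipodal $\ZZ/2$-action on $\sphere^2$ (the only nontrivial free finite group action on $\sphere^2$, and it is orientation-reversing). Therefore every holonomy group $\Gamma$ is trivial, $(p,q)=(1,1)$, $M^*\cong\sphere^2$ is a smooth manifold, and $\pi:M\to\sphere^2$ is an honest smooth $\sphere^2$-bundle.

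Finally, oriented $\sphere^2$-bundles over $\sphere^2$ are classified by $\pi_1(\SO(3))=\ZZ/2$ via Smale's homotopy equivalence $\Diff^+(\sphere^2)\simeq\SO(3)$; the trivial bundle is $\sphere^2\times\sphere^2$ and the nontrivial bundle is diffeomorphic to $\CC\PP^2\#-\CC\PP^2$ by Steenrod, exactly as invoked in Section~\ref{Proof of splitting thm}.
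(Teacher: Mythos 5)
Your proposal is correct, and it takes a genuinely different route from the paper. The paper determines the generic leaf type via the Haefliger classifying space $B=Fr(M^*)\times_{\O(2)}E\O(2)$ of the orbifold $M^*$: one shows $B\to M^*$ is a rational homotopy equivalence, feeds the fibration $L\to \hat M\to B$ into the long exact homotopy sequence, and deduces $\pi_1(L)\otimes\QQ=0$, hence $L\cong\sphere^2$ among orientable compact surfaces. You instead package the foliation as a Seifert-type fibration over $\sphere^2(p,q)$ and use additivity/multiplicativity of the (compactly supported) Euler characteristic to get $\chi(M)=\chi(F)(1/p+1/q)$; positivity of $\chi(M)$ for a closed simply connected $4$-manifold then forces $\chi(F)>0$, so $F\in\{\sphere^2,\RR\PP^2\}$. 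Your orientability argument is also a mild streamlining of the paper's: the paper passes through the holonomy representation to show the normal bundle of a putative $\RR\PP^2$ leaf is orientable and derives a contradiction, whereas you note directly that $w_1(T\fol)\in H^1(M;\ZZ_2)=0$, so every leaf is orientable, ruling out $\RR\PP^2$ both as a principal leaf and as an exceptional quotient $\sphere^2/\ZZ_2$. Both routes are sound; yours is more elementary and self-contained, while the paper's avoids having to set up the Seifert structure carefully and is more robust to weakening compactness hypotheses. One small caution on the step you flagged: the constancy of the principal-leaf type follows because the leaf projection restricted to the (connected) principal part of $M^*$ is a locally trivial fibration, which is already part of the Preliminaries; and the Euler-characteristic additivity should be done with compactly supported Euler characteristic (or, equivalently, by observing the exceptional leaves are even-codimension compact strata), since the principal stratum is open and noncompact.
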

\begin{proof}
Let $M^*$ be the leaf space of $(M,\fol)$. The frame bundle $Fr(M^*)$ is a smooth manifold with an almost free $\O(2)$-action (cf. \cite[Thm. 1.23]{ALR07}), and the projection $P:Fr(M^*)\to M^*$ is a smooth map in the orbifold sense. If $EO(2)$ denotes the universal $\O(2)$-bundle, we construct the  \emph{Haefliger classifying space} $B=Fr(M^*)\times_{\O(2)}E\O(2)$ of $M^*$. The map $B\to M^*$ taking $[x,g]$ to $P(x)$ induces an isomorphism in rational cohomology and thus on rational homotopy (cf. \cite{Hae82}), and therefore $\pi_2(B)\otimes \QQ=\QQ$.

Up to homotopy, there is a fibration (see for example \cite{GaRa13})
\begin{equation}\label{fibration}
L\to \hat{M}\to B
\end{equation}
where $\hat{M}$ is homotopic to $M$ and $L\to \hat{M}$ is homotopic to the inclusion of a regular leaf. From the long exact sequence in homotopy of
the fibration \eqref{fibration} it follows that $\pi_1(L)\otimes \QQ=\QQ$ or $0$. Since $L$ is a compact (orientable) surface, it follows that $L=\sphere^2$. The only possible exceptional leaf would be $\RR\PP^2$. In such case the holonomy of $\fol$ (cf. Subsection \ref{SS:holonomy}) would act on
$\nu_p(\RR\PP^2)=\RR^2$ without fixed points except the origin, and therefore it would act by an orientation-preserving map. This implies that the normal bundle of $\RR\PP^2$ is orientable, but this would give a contradiction
since $M$ is orientable and $\RR\PP^2$ is not. In particular there cannot be exceptional leaves, $M^*$ is diffeomorphic to $\sphere^2$, and $M$ is an
$L=\sphere^2$-bundle over $M^*=\sphere^2$. Therefore $M$ is foliated diffeomorphic to $(\sphere^2\times \sphere^2, \sphere^2\times\{pt\})$, or
$\CC\PP^2\#-\CC\PP^2$ foliated by the fibers of the $2$-sphere bundle $\CC\PP^2\#-\CC\PP^2\to \sphere^2$.
\end{proof}
\subsection{}If the foliation is singular,  the leaf space $M^*$ is homeomorphic to a disk $D^2$, where the boundary points correspond exactly to the singular leaves of
$\fol$. In this case the leaves are orientable surfaces, and they admit a fibration over the singular leaves as described in \eqref{E:Inf_fibration}. In particular, unless the  the regular leaves are tori the only possible singular leaves are points. In such a case, let $p$ be a singular point and let $(\sphere_p, \fol_p)$ be the infinitesimal foliation at $p$. Since $\sphere_p=\sphere^3$ in this case, by \cite{Ra12} the regular leaf $\mathcal{L}$ of $\fol_p$ must be diffeomorphic to either $\sphere^2$ or $T^2$. The fibration \eqref{E:Inf_fibration} in this case becomes $\mathcal{L}\to L\to p$, where $L$ is a regular leaf of $\fol$, and thus $L$ must also be diffeomorphic to either $\sphere^2$ or $T^2$.

If the regular leaves of $\fol$ are diffeomorphic to $\sphere^2$ then the singular leaves must be points, and the singular stratum $\Sigma_0$ is the whole boundary of $M^*$, which is diffeomorphic to $\sphere^1$. In this case, $M^*$ can be written as a union of a ball of a point in
the interior, and a tubular neighborhood of the boundary. The preimage of this decomposition under $\pi$ gives $M$ a splitting structure as (\ref{splitting str}), and the result
follows from Theorem \ref{splitting thm}.

The only remaining case to consider is the one where the regular leaf $L$ of $\fol$ is diffeomorphic to $T^2$. In this case we have the following
structure \cite{GaRa13}:
\begin{itemize}
\item The leaf space $M^*$ is homeomorphic to a disk $D^2$ with boundary and corners. There are at least $2$ corners.
\item The leaves corresponding to the interior points of $M^*$ are diffeomorphic to $T^2$. The leaves corresponding to points in the boundary of
    $M^*$ (not corner points) are diffeomorphic to $\sphere^1$. The leaves corresponding to the corners are points.
\end{itemize}
The preimage of each component of $\partial M^*$ under $\pi$ is a singular stratum of $\fol$. Fix a regular leaf $L_0\cong T^2$, and for each edge $E_i$ of
$\partial M^*$  fix a leaf $L_i\cong \sphere^1$ in the preimage of the corresponding edge, and a fibration $\P_i:L_0\to L_i$. Let $(m_i,n_i)\in
\pi_1(L_0)=\ZZ\oplus \ZZ$, $\gcd(m_i,n_i)=1$, be a primitive generator of the kernel of ${\P_i}_*:\pi_1(L_0)\to \pi_1(L_i)$. Notice that $\{\pm (m_i,n_i)\}$
does not depend on the choice of $L_i$ and $\P_i$. Call $(m_i,n_i)$ the \emph{weight} of the edge $E_i$. These correspond precisely to the weights
defined in \cite{OR70}. In particular, the following properties hold:
\begin{prop}
\begin{enumerate}
\item \label{fundgp} For every $\gamma:[0,1]\to M^*$ such that $\gamma(0)\in E_i$, $\gamma(1)\in E_j$, and $\gamma'(0)\perp E_i$, $\gamma'(1)\perp
    E_j$, the preimage of $Im(\gamma)$ in $M$ is a lens space, with fundamental group $\ZZ/k\ZZ$, where $k=\left|\begin{array}{cc}m_i & m_j \\n_i &
    n_j\end{array}\right|$. Call any such curve a $(E_i,E_j)$-curve.
\item  \label{less4} If $\partial M^*$ consists of at most $4$ edges, then $M$ can be divided as a union of two disk bundles of rank greater than $1$.
\item \label{more4} If $\partial M^*$ consists of more than $4$ edges, then there exist edges $E_i$, $E_j$, $j\neq i\pm 1$, such that the preimage
    of every $(E_i,E_j)$-curve is diffeomorphic to $\sphere^3$.
\item \label{connsum} If $\partial M^*$ consists of more than $4$ edges, then $(M,\fol)$ can be written as a foliated connected sum $(\widetilde{M}_1,\fol_1)\#(\widetilde{M}_2,\fol_2)$, where $\fol_1$, $\fol_2$ are codimension $2$ singular Riemannian foliations by tori, whose leaf spaces $\widetilde{M}_1^*$, $\widetilde{M}_2^*$ have boundaries with a number of edges strictly lower than that of $\partial M^*$.
\end{enumerate}
\end{prop}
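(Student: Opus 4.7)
My plan is to verify the four assertions in order, exploiting the local normal form of $\fol$ at edges and corners of $M^*$.

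\emph{Part (i).} At a point $p\in\pi^{-1}(E_i)$ the leaf is a circle, and the infinitesimal foliation $(\sphere_p=\sphere^2,\fol_p)$ is a codimension-one singular Riemannian foliation with two antipodal point leaves and circle leaves (the nearby $T^2$-leaves of $\fol$ collapsing to $\sphere^1$), whose circle class in $\pi_1(L_0)=\ZZ\oplus\ZZ$ is $(m_i,n_i)$. I would conclude that a short transverse arc $\gamma_i\subset M^*$ starting at $E_i$ pulls back to a solid torus $V_i$ foliated by parallel tori, with meridian $(m_i,n_i)$. Splitting an $(E_i,E_j)$-curve $\gamma$ at a regular interior point then yields $\pi^{-1}(\gamma)=V_i\cup_{T^2}V_j$, a genus-one Heegaard decomposition with meridian classes $(m_i,n_i)$ and $(m_j,n_j)$; classical $3$-manifold theory identifies this with a lens space, and van Kampen gives $\pi_1=\ZZ/k\ZZ$ with $k=|m_in_j-m_jn_i|$.

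\emph{Part (ii).} I would enumerate $r\in\{2,3,4\}$ (the number of edges of $\partial M^*$) and, in each case, choose an arc $\alpha\subset M^*$ splitting $M^*$ into two closed subregions $D_{\pm}$, each retracting onto a core $M_{\pm}\subset M$ of positive codimension. The preimage of a corner is a point, while the preimage of an entire edge together with its two adjacent corners is a $2$-sphere (the suspension of the $\sphere^1$-leaves over that edge); this gives the candidates for $M_{\pm}$. For $r=2$ the arc joins the two corners and both cores are points; for $r=3$ it joins a corner to the midpoint of the opposite edge, giving a point and a $2$-sphere; for $r=4$ it joins midpoints of opposite edges, giving two $2$-sphere cores. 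The disk-bundle structure of each $\pi^{-1}(D_{\pm})$ follows from the slice/holonomy description around the core, and Theorem \ref{splitting thm} then applies.

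\emph{Part (iii).} The smoothness of $M$ at each corner forces consecutive weights $w_i=(m_i,n_i),w_{i+1}$ to form a $\ZZ$-basis, i.e. $|w_i\times w_{i+1}|=1$, which is precisely the smoothness condition for the complete fan generated by $w_1,\dots,w_r$ in $\RR^2$. From this one obtains the classical recursion $w_{i-1}+w_{i+1}=a_iw_i$ with integers $a_i$ satisfying $\sum_{i=1}^r a_i=3r-12$. The classification of smooth complete toric surfaces as iterated blow-ups of $\PP^2$ or the Hirzebruch surfaces guarantees, for $r\geq 5$, the existence of a \emph{blow-down ray}, that is, an index $i$ with $a_i=1$ and hence $w_{i+1}=w_i-w_{i-1}$. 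Then $|w_{i-1}\times w_{i+1}|=|w_{i-1}\times w_i|=1$, the edges $E_{i-1},E_{i+1}$ are non-adjacent (separated by $E_i$), and part (i) forces the preimage of every $(E_{i-1},E_{i+1})$-curve to be $\sphere^3$.

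\emph{Part (iv).} Let $S=\pi^{-1}(\alpha)\cong\sphere^3$ be as produced in (iii); it separates $M$ into compact pieces $N_1,N_2$. I would attach to each $N_\ell$ a $4$-disk $D^4$ equipped with the linear $T^2$-invariant singular Riemannian foliation on $\CC^2$ whose two coordinate-plane weights are $(w_{i-1},w_{i+1})$; since these form a $\ZZ$-basis by (iii), the cap is well defined, and its boundary foliation on $\sphere^3$ matches $\fol|_S$ up to foliated isotopy. Setting $\widetilde M_\ell=N_\ell\cup D^4$ with extended foliation $\fol_\ell$ then gives $M=\widetilde M_1\#\widetilde M_2$. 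In the leaf space $\widetilde M_\ell^*$ the two half-edges of $E_{i-1},E_{i+1}$ on side $\ell$ merge smoothly with the two cap-edges (same weights), leaving only the origin of $D^4$ as a single new corner; a direct count shows the small side carries $3$ edges and the big side $r-1$ edges, both strictly less than $r$ for $r\geq 5$. The principal obstacle I anticipate is checking that the cap foliation can be glued \emph{Riemannianly} compatibly with $\fol|_S$; I would handle this by exhibiting $(D^4,\fol|_{D^4})$ explicitly as a linear $T^2$-representation on $\CC^2$ whose infinitesimal data at the boundary matches $\fol|_S$ by uniqueness of codimension-one singular Riemannian foliations on $\sphere^3$ with prescribed circle-leaf class.
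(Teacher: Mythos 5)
Your parts (i) and (ii) follow essentially the same route as the paper: in (i) the Heegaard/van Kampen computation is the same as the paper's Mayer--Vietoris-style presentation of $\pi_1(S_{ij})$ (the paper writes the presentation out explicitly and reduces it to $\ZZ/k\ZZ$, but the content is identical), and in (ii) the paper likewise decomposes $M^*$ into a neighbourhood of a vertex and/or of an edge whose preimage is a point or a $\sphere^2$. One small slip: for $r=3$ the paper uses a small disk around \emph{one vertex} together with the complementary neighbourhood of the \emph{opposite edge}; an arc joining a corner to the midpoint of the opposite edge does not produce pieces retracting onto a point and a $2$-sphere, so your description of the cut for $r=3$ should be corrected, though the intended decomposition is the right one.

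Your part (iii) is a genuinely different argument. The paper, following Orlik--Raymond, normalizes $w_1=(0,1)$, $w_2=(1,0)$ and runs a short, completely elementary integer-inequality argument that directly produces a non-adjacent pair with unit determinant. Your toric approach (Noether formula $\sum a_i=3r-12$, existence of a $(-1)$-ray for $r\geq5$) is conceptually appealing, but it relies on the weights $\{\pm w_i\}$ admitting signs so that they form a \emph{complete} smooth fan in $\RR^2$ with winding number exactly $1$. The local corner condition only gives $|\det(w_i,w_{i+1})|=1$ cyclically; the winding-number normalization is a global input coming from simple-connectedness (as in Orlik--Raymond's structure theory), and you neither state nor prove it. Without it, Noether's formula and the classification of smooth complete toric surfaces do not apply. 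The paper's combinatorial argument avoids this issue entirely, using only the cyclic determinant conditions. So your (iii) is likely salvageable, but as written it has an unjustified step that the paper's proof does not require.

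Part (iv) contains the most serious gap. You correctly identify the obstacle (\emph{"checking that the cap foliation can be glued Riemannianly compatibly with $\fol|_S$"}), but the proposed resolution --- matching the cap to $\fol|_S$ \emph{"by uniqueness of codimension-one singular Riemannian foliations on $\sphere^3$ with prescribed circle-leaf class"} --- does not suffice. That uniqueness is only up to foliated diffeomorphism, and a foliated diffeomorphism does not carry the boundary metric induced from $(M_1,g)$ to the boundary metric induced from the linear model on $\DD^4$. To obtain a singular Riemannian foliation on $\widetilde M_\ell$, one needs a bundle-like metric on the collar interpolating between the two; this is exactly the paper's Gluing Lemma (Lemma \ref{L:gluing}), whose proof occupies all of Section 5 and involves finding a foliated diffeomorphism matching horizontal spaces near the singular leaves, showing it is foliated-isotopic to the identity, and then interpolating first the horizontal distributions and finally the metrics themselves. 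None of this is done or sketched in your proposal; as it stands, the connected-sum decomposition in (iv) is not established in the Riemannian/foliated category.
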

This proves Theorem \ref{SRF thm} for 2-dimensional foliations by induction and use of Theorem \ref{splitting thm}. Hence, the proof of the total Theorem \ref{SRF thm} will be complete.
\begin{proof}
\ref{fundgp}) Given a $(E_i,E_j)$-curve $\gamma$, let $S_{ij}$ be the preimage of $\gamma$ under $\pi$. It is clear that $\pi^{-1}(\gamma|_{(0,1)})$
is a manifold. Moreover, since $\gamma'(0)$ is perpendicular to $E_i$, the preimage of $\gamma|_{(0,\epsilon)}$, for $\epsilon>0$ small enough, is
diffeomorphic to a vector bundle over $\pi^{-1}(\gamma(0))=\sphere^1$, and therefore $\pi^{-1}(\gamma|_{[0,\epsilon)})$ is a manifold. By symmetry,
$\pi^{-1}(\gamma|_{(1-\epsilon,1]})$ is a manifold, and therefore $S_{ij}$ is a manifold as well.

Without loss of generality, we can suppose that $\gamma$ passes through $\pi(L_0)$. Consider the open cover $U_0=\pi^{-1}(\gamma[0,1))$,
$U_1=\pi^{-1}(\gamma(0,1])$ of $S_{ij}$. $U_0$ and $U_1$ retract to $\pi^{-1}(\gamma(0))$ and $\pi^{-1}(\gamma(1))$ respectively, which are both
diffeomorphic to $\sphere^1$, while $U_0\cap U_1$ retracts to $L_0$. Moreover, the inclusion maps $\iota_0:U_0\cap U_1\to U_0$ and $\iota_1:U_0\cap
U_1\to U_1$ are homotopic to the projections $\P_i: L_0\to L_i$ and $\P_j:L_0\to L_j$, respectively. The maps ${\P_i}_*,{\P_j}_*:\ZZ^2\to \ZZ$ induced
between the fundamental groups, are
\[
{\P_i}_*(x,y)=n_ix-m_iy,\qquad {\P_j}_*(x,y)=n_jx-m_jy.
\]
By van Kampen theorem, the fundamental group of $S_{ij}$ has a presentation
\[
\langle g_1,g_2|\; g_1^{n_i}=g_2^{n_j}, g_1^{m_i}=g_2^{m_j}\rangle.
\]
Given integers $p,q$ such that $m_ip+n_iq=1$, we obtain $g_1=g_1^{m_ip+n_iq}=g_2^{m_jp+n_jq}$ and therefore $\pi_1(S_{ij})$ is generated by $g_2$
only. Moreover, $1=g_1^{m_in_i-n_im_i}=g_2^{m_in_j-n_im_j}$ and thus $\pi_1(S_{ij})$ is cyclic of order $m_in_{j}-n_im_j$, as we wanted to prove.
\\

\ref{less4}) If $\partial M^*$ has at most $2$ edges, $M^*$ can be decomposed  as a union of two balls around $2$ points in $\partial M^*$, containing the
vertices. If $\partial M^*$ has three vertices, $M^*$ can be decomposed as the union of a disk around one vertex, and the disk around the opposite
edge. Finally, if $\partial M^*$ has four vertices, it is diffeomorphic (as an orbifold) to a rectangle, and $M^*$ can be decomposed as a union of the
disks around two opposite edges. Therefore, in each case the preimage of the decomposition in $M^*$ divides $M$ as a union of disk bundles, where the preimage of an edge is $\sphere^2$.

\ref{more4}) This point was proved in \cite[Thm. 5.7]{OR70}. For the sake of completeness, we exhibit the proof here. Let $r\geq 5$ be the number of
edges of $M^*$. Notice that the preimage of a $(E_i,E_{i+1})$-curve is diffeomorphic to the unit sphere $\sphere^3$ around the (0-dimensional) leaf that is the
preimage of the corner between $E_i$ and $E_{i+1}$. From
part \ref{fundgp}) it follows that
\begin{equation}\label{consecutive-edges}
\left|\begin{array}{cc}m_i & m_{i+1} \\n_i & n_{i+1}\end{array}\right|=\pm 1,\qquad i=1,\ldots r,
\end{equation}
where one should read $r+1=1$.
We now choose the generators of $\pi_1(L_0)=\ZZ^2$ so that the edges $E_1$ has weight $(0,1)$, and $E_2$ has weight $(1,0)$. From
\eqref{consecutive-edges}, the weights of $E_3$ and $E_r$ are, respectively, $(m_3, \pm 1)$, $(\pm 1, n_r)$. Notice that if $m_3=0$
(\emph{resp.} $m_3=\pm 1$) , then the preimage of a $(E_r,E_3)$-curve (\emph{resp.} a $(E_1,E_3)$-curve) is diffeomorphic to $\sphere^3$. In the same
way, if $n_r=0$, (\emph{resp. $n_r=\pm 1$}) then the preimage of a $(E_r,E_3)$-curve (\emph{resp.} a $(E_r,E_2)$-curve) is diffeomorphic to
$\sphere^3$. Moreover, for any $i$ if we have $|m_i|=|n_i|$, then $m_i, n_i=\pm 1$ since $m_i, n_i$ are coprime, and thus the preimage of any
$(E_1,E_i)$-curve or $(E_2,E_i)$-curve is $\sphere^3$.

Therefore, we can now restrict to the case $|m_3|>1=|n_3|$, $|n_r|>1=|m_r|$, and $|m_i|\neq |n_i|$ for every $i=3,\ldots r$. Let $3\leq i<r$ be the first
index such that $|m_i|\geq|n_i|+1$, $|n_{i+1}|\geq|m_{i+1}|+1$. The existence of such an $i$ is assured by (\ref{consecutive-edges}). Then
\[
|m_in_{i+1}|\geq (|n_i|+1)(|m_{i+1}|+1)= |m_{i+1}n_i|+|m_{i+1}|+|n_i|+1
\]
and therefore
\[
1=|m_in_{i+1}-m_{i+1}n_i|\geq |m_in_{i+1}|-|m_{i+1}n_i|\geq |m_{i+1}|+|n_i|+1.
\]
In particular $m_{i+1}=n_i=0$, which implies $|n_{i+1}|=|m_i|=1$, and applying the result in \ref{fundgp}) again, we obtain that the preimages of both $(E_1,E_i)$-curves and $(E_2,E_{i+1})$-curves are
diffeomorphic to $\sphere^3$.

\ref{connsum}) By \ref{more4}), the preimage $S\cong \sphere^3$ of some $(E_i,E_j)$-curve decomposes $M$ as a union of two connected $4$-manifolds $M_1$, $M_2$ along $S$. In the following we will focus on $M_1$, but everything will hold
for $M_2$ as well. Now $(M_1, \fol|_{M_1})$ is a foliated $4$-manifold with boundary $S$,  and the restriction of $\fol$ to $S$ gives a codimension $1$
B-foliation $(S,\fol|_{S})$. By \cite{GaRa13} it is foliated diffeomorphic to $(\sphere^3,\fol')$, where $\fol'$ is given by the orbit decomposition of the (unique up to conjugation) isometric $T^2$-action on the round $\sphere^3$. This action can be extended to an isometric action on the $4$-disk
$\DD^4$, and this action induces a singular Riemannian foliation $\fol_{\DD^4}$. The manifold
\[
\widetilde{M}_1:=M_1\cup_{S\cong \sphere^3} \DD^4
\]
is canonically foliated by the singular foliation $\fol_1$ that restricts to $\fol|_{M_1}$ on $M_1$ and $\fol_{\DD^4}$ on $\DD^4$. In the next section we will prove the following,
rather technical, lemma:
\begin{lem}[Gluing Lemma]\label{L:gluing}
There is a metric $\tilde{g}$ on $\widetilde{M}_1$ that is \emph{bundle-like} for $\fol_1$, i.e., such that $(\widetilde{M}_1,\tilde{g},\fol_1)$ is a singular Riemannian
foliation.
\end{lem}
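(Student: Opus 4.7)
The plan is to construct $\tilde g$ by gluing bundle-like metrics from each side in a collar of $S$, using the homogeneity of $\fol|_S$ to control the interpolation. First, fix any bundle-like metric $g_1$ on $(M_1, \fol|_{M_1})$ and let $g_2$ be a $T^2$-invariant round metric on $\DD^4$, which is automatically bundle-like for $\fol_{\DD^4}$. Since $S$ is $\fol_1$-saturated on both sides, its unit normal vector field is horizontal with respect to each of these metrics, and the normal geodesic flow preserves horizontality; therefore, in Fermi coordinates about $S$, each metric takes the product form $\phi_i^* g_i = dt^2 + h_i(t)$, where $\phi_i : S\times[0,\epsilon_i)\hookrightarrow \widetilde{M}_1$ is the corresponding collar and $t\mapsto h_i(t)$ is a smooth family of bundle-like metrics on $(S,\fol|_S)$.

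The construction then reduces to interpolating between $h_1(0)$ and $h_2(0)$ through bundle-like metrics on $(S,\fol|_S)$ and splicing the families across the collars. This is the main technical obstacle: a convex combination of two bundle-like metrics typically fails to be bundle-like, because the horizontal (leaf-perpendicular) distribution depends on the metric. The key device circumventing this is the homogeneity of $\fol|_S$: since the foliation on $S\cong\sphere^3$ is the orbit decomposition of a $T^2$-action, $T^2$ acts on $(S,\fol|_S)$ by foliated diffeomorphisms. Consequently, averaging $h\mapsto Ah := \int_{T^2}\sigma^* h\, d\sigma$ sends bundle-like metrics to $T^2$-invariant bundle-like metrics, and the $T^2$-invariant metrics form a convex cone on which the bundle-like property is preserved under convex combinations (invariance already forces the orbits to be equidistant).

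Carrying this out, I would first deform $h_1(t)$ inside a subcollar of $\{t=0\}$ to $Ah_1(0)$, and $h_2(t)$ similarly to $Ah_2(0)$, then linearly interpolate between the two $T^2$-invariant metrics in a central strip. All this is done with a smooth cutoff in the $t$-variable, producing a smooth family $\tilde h(t)$ of bundle-like metrics on $S$ that agrees with $h_1(t)$ away from $S$ on the $M_1$ side and with $h_2(t)$ away from $S$ on the $\DD^4$ side. The metric $\tilde g := dt^2 + \tilde h(t)$ in the combined collar, extended by $g_1$ on $M_1$ and by $g_2$ on $\DD^4$ outside the collars, is then smooth and bundle-like for $\fol_1$ on all of $\widetilde{M}_1$. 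Smoothness and bundle-likeness at the two exceptional Hopf circles on $S$ (the singular strata of $\fol|_S$) follow from the $T^2$-equivariance of every step of the construction: in the infinitesimal slice representation at each singular leaf, the averaged and interpolated metrics remain equivariant and therefore extend smoothly via the standard local normal form of homogeneous singular Riemannian foliations.
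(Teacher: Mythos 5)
Your idea of using the $T^2$-action on $(S,\fol|_S)\cong(\sphere^3,\fol')$ to average metrics is attractive, and two of your assertions are indeed correct: the $T^2$-average $Ah$ of a metric $h$ is $T^2$-invariant and hence automatically bundle-like for the orbit foliation $\fol'$ (a torus acting by isometries has equidistant orbits), and a convex combination of $T^2$-invariant metrics is again $T^2$-invariant, hence bundle-like. But there is a genuine gap at the crucial step, namely the claim that you can ``deform $h_1(t)$ inside a subcollar of $\{t=0\}$ to $Ah_1(0)$.'' This deformation connects the generic bundle-like (but non-invariant) metric $h_1(0)$ to its average $Ah_1(0)$, and neither of the properties you listed covers the intermediate metrics: a convex combination $(1-s)h_1(0)+s\,Ah_1(0)$, $0<s<1$, is typically not $T^2$-invariant and typically does not even have the same horizontal distribution at both endpoints, so there is no reason for it to be bundle-like. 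In other words, the averaging trick packages the final interpolation nicely, but the deformation from a given bundle-like metric into the invariant cone is exactly the hard content of the lemma, and the proposal does not address it.

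This is in fact where the paper's proof spends nearly all of its effort. Rather than trying to reach a $T^2$-invariant metric, the paper normalizes the two boundary metrics $g_0,g_1$ directly: (i) using the homothetic transformations $h^i_\lambda$ with respect to $g_0$ and $g_1$, it builds an orientation-preserving foliated diffeomorphism $\ol{F}$ of $(\sphere^3,\fol')$ for which $\ol{F}^*g_0$ and $g_1$ have the same horizontal spaces near the two singular circles; (ii) it proves $\ol{F}$ is foliated isotopic to the identity, using the deformation retract $T^2\hookrightarrow\Diff_0(T^2)$ plus an isotopy of tubular neighborhood maps near the singular leaves — this step produces a path of bundle-like metrics by pullback; (iii) it deforms the horizontal distribution in the interior while keeping it fixed near the singular leaves, via the explicit formula $h_t(x,y)=\tilde{g}(\pi_t x,\pi_t y)+\tilde{g}(\pi_h x,\pi_h y)$; and (iv) only then does it apply linear interpolation, which the final lemma shows preserves the bundle-like condition precisely because the horizontal distributions have already been matched. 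Your proposal, as written, skips the analogues of (i)--(iii) and relies on a linear interpolation in a regime where the hypotheses of (iv) are not met. To repair it, you would still need something like the paper's matching of horizontal distributions and the foliated isotopy argument, at which point the averaging device no longer shortens the proof.
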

This proves, in particular, that $(M,\fol)$ is foliated diffeomorphic to a foliated connected sum $(\widetilde{M}_1,\fol_1)\#(\widetilde{M}_2,\fol_2)$ of two codimension $2$ singular Riemannian foliations by tori. Moreover, it is easy to see that the leaf spaces $\widetilde{M}_i/\fol_i$ are obtained by gluing a corner $\DD^4/\fol_{\DD^4}\simeq\{(x,y)\in
\DD^2|\; x\geq 0, y\geq 0\}$ to $M_i/\fol|_{M_i}$ along the common boundary $\sphere^3/\fol'\simeq S/\fol|_S$, and in particular have strictly less boundary edges than
$M/\fol$ by \ref{more4}).

The proof of the proposition is now complete.
\end{proof}

\section{The gluing lemma}
The goal of this section is to prove Lemma \ref{L:gluing} above. Since $(S,\fol|_{S})$ is foliated diffeomorphic to $(\sphere^3,\fol')$ we will from
now on identify these two spaces, and suppose that the (foliated) boundary of $M_1$ is $(\sphere^3, g_0, \fol')$ with some metric $g_0$.

In what follows, we write $\sphere^3$ as a union of disk bundles
\begin{equation}\label{E:decomposition}
\sphere^3=\DD^2\times \sphere^1\bigcup_{\phi_0} -\DD^2\times \sphere^1,
\end{equation}
where $\phi_0\in \Diff(T^2=\partial(\DD^2\times \sphere^1))$ is the map interchanging the two factors,
such that the leaves of $\fol'$ are the concentric tori with respect to the canonical metric in $\DD^2\times \sphere^1$.

First of all, we observe the following.
\begin{lem}
There is a neighborhood $U$ of $\sphere^3$ in $M_1$ that is foliated diffeomorphic to $(\sphere^3\times [-\epsilon,0],\fol'\times \{pt\})$. Moreover,
the metric $g$ on $M_1$ induces a metric on $\sphere^3\times[-\epsilon,0]$ of the form $g_t+dt^2$, where $\mathfrak{F}_-=\{g_t\}_{t\in [-\epsilon,0]}$ is a
family of bundle-like metrics on $(\sphere^3,\fol')$.
\end{lem}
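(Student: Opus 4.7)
The natural candidate for $U$ is the normal collar of $\sphere^3$ inside $M_1$. Let $\nu$ denote the inward unit normal to $\sphere^3=\partial M_1$ with respect to $g$. For $\epsilon>0$ sufficiently small, the map
\[
\Phi:\sphere^3\times[-\epsilon,0]\lra M_1,\qquad (p,t)\longmapsto\exp_p(-t\nu_p),
\]
is a diffeomorphism onto an open neighborhood $U$ of $\sphere^3$ by the tubular neighborhood theorem. For each fixed $p$, the curve $t\mapsto\Phi(p,t)$ is a unit-speed geodesic meeting every slice $\sphere^3\times\{t\}$ orthogonally, and the Gauss lemma then delivers $\Phi^*g=g_t+dt^2$, where $g_t$ is the induced metric on the slice. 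It therefore remains to show that $\Phi$ is a foliated diffeomorphism between $(\sphere^3\times[-\epsilon,0],\fol'\times\{\mathrm{pt}\})$ and $(U,\fol|_U)$; the bundle-like property of each $g_t$ then comes for free, since $g$ is bundle-like for $\fol$ and $\Phi(\sphere^3\times\{t\})$ will be an $\fol$-saturated hypersurface.

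\textbf{The foliated condition.} Because $\sphere^3=\pi^{-1}(\gamma)$ for the $(E_i,E_j)$-curve $\gamma\In M/\fol$, it is saturated by leaves of $\fol$; in particular, for every $p\in\sphere^3$ the leaf $L_p$ of $\fol$ through $p$ lies in $\sphere^3$, so $\nu_p\perp T_pL_p$. Hence $t\mapsto\Phi(p,t)$ is horizontal at $t=0$, and by the transnormality of $(M_1,\fol)$ it remains perpendicular to every leaf of $\fol$ that it meets. To upgrade this pointwise statement into the leafwise assertion that $\Phi(L\times\{t\})$ lies in a single leaf of $\fol$ for every leaf $L$ of $\fol'$, I would invoke the standard ``parallel translation of leaves'' principle for singular Riemannian foliations (cf.\ \cite{Mol88, ABT13}): if $L$ is a leaf and $\xi$ is a smooth section of $\nu L$ invariant under the holonomy acting on the normal slice, then $q\mapsto\exp_q(-t\xi_q)$ sends $L$ into one leaf for small $t$. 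Here $\nu$ restricts to such a section on each leaf $L\In\sphere^3$: the holonomy permutes nearby leaves of $\fol$ and must therefore preserve the saturated hypersurface $\sphere^3$, hence preserves $T\sphere^3$ and its $g$-orthogonal complement inside $\nu L$, which is precisely the line spanned by $\nu$; the global choice of the ``outward'' direction rules out any sign ambiguity.

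\textbf{Main obstacle.} The delicate step is the holonomy invariance of $\nu$ at the two singular circle leaves of $\fol'\In\sphere^3$, where the holonomy acts nontrivially on the full $3$-dimensional normal slice and one must inspect the infinitesimal foliation $(\sphere_p^2,\fol_p)$ at a point $p$ of such a leaf. However, $\nu_p$ is characterized as the unique unit vector orthogonal to both $T_pL$ and the one-dimensional line $T_p\sphere^3\cap\nu_pL$, and both are holonomy-invariant by the $\fol$-saturation of $\sphere^3$; therefore $\nu_p$ itself is preserved, and the ``parallel translation of leaves'' argument applies uniformly to both regular and singular leaves of $\fol'$. Once this verification is in place, $\Phi$ becomes a foliated diffeomorphism, each $\Phi(\sphere^3\times\{t\})$ is $\fol$-saturated, and pulling back $g$ through $\Phi_t:=\Phi(\cdot,t)$ transports the bundle-like condition from $(M_1,g,\fol)$ to $(\sphere^3,g_t,\fol')$, completing the proof of the lemma.
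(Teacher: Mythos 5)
Your proposal is correct and follows essentially the same route as the paper: both construct the collar by exponentiating the globally determined outward unit normal field, observe that this field is basic (holonomy-invariant, precisely because $\sphere^3$ is $\fol$-saturated and ``outward'' is globally well-defined), and invoke equifocality of singular Riemannian foliations --- which is exactly the ``parallel translation of leaves'' principle you cite, proved in \cite{AT08} --- to conclude that the normal exponential is a foliated map, after which the Gauss lemma gives $g=g_t+dt^2$ and saturation of the slices $S_t$ yields the bundle-like property of $g_t$. One minor slip: at a singular circle leaf $L$ of $\fol'$ the intersection $T_p\sphere^3\cap\nu_pL$ is two-dimensional rather than a line, but this does not affect the argument since the cleaner characterization of $\nu_p$ as the unique outward unit normal to the $\fol$-saturated hypersurface $\sphere^3$ already forces holonomy-invariance uniformly at regular and singular leaves alike.
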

\begin{proof}
Consider the unique unit length normal vector field $X$ on $\sphere^3\In M_1$, pointing in the \emph{outward} direction of $M_1$. Since $X$ is
uniquely defined, its restriction to every leaf of $\fol'$ is basic with respect to the projection $M_1\rightarrow M_1/\fol|_{M_1}$. Since $\sphere^3$ is compact, there is a neighborhood $U$ of $\sphere^3$ in
$M_1$ such that the map $(p,t)\mapsto \exp_ptX(p)$ defines a diffeomorphism $\exp^{\perp}:\sphere^3\times [-\epsilon,0]\to U$. On the other hand, by
the equifocality of singular Riemannian foliations \cite[Thm. 1.5]{AT08} $\exp^{\perp}$ is also a foliated map because $X$ is basic. Moreover, $X$
is perpendicular to the family of submanifolds $S_t=\exp^{\perp}(\sphere^3\times \{t\})$, all diffeomorphic to $\sphere^3$ via $\exp^{\perp}$, and
therefore the tangent space of $U$ splits orthogonally as $TS_t\oplus \scal{X}$. If $dt$ denotes the (exact) $1$-form dual to $X$, then the metric $g$
splits as $g|_{TS_t}+dt^2$. By setting $g_t=(\exp^{\perp})^*(g|_{TS_t})$ we obtain the result.
\end{proof}

In the same way, there is a neighborhood of $(\sphere^3=\partial \DD^4, g_1,\fol')$ in $(\DD_4,\fol_{\DD^4})$ that is foliated diffeomorphic to
$\sphere^3\times [1,1+\epsilon]$, where the metric has the form $g_t+dt^2$ for some family of bundle-like metrics $\mathfrak{F}_+=\{g_t\}_{t\in[1,1+\epsilon]}$ on $\sphere^3$.

Suppose now that the metrics $g_0$ and $g_1$ on $\sphere^3$ can be connected by a family $\mathfrak{F}=\{g_t\}_{t\in [0,1]}$ of smoothly varying bundle-like metrics for $\fol'$, i.e., such that $(\sphere^3, g_t,\fol')$ is a singular Riemannian foliation for all $t\in [0,1]$. In this
case one can extend $\mathfrak{F}$ to a smooth family $\{g_t\}_{t\in[-\epsilon, 1+\epsilon]}$ using $\mathfrak{F}_\pm$. In particular
$(\sphere^3\times [0,1],g_t+dt^2,\fol'\times\{pt\})$ is a singular Riemannian foliation that can be glued smoothly with $(M_1,\fol|_{M_1})$  and
$(\DD^4,\fol_{\DD^4})$, giving a singular Riemannian foliation on
\[
M_1\bigcup (\sphere^3\times [0,1])\bigcup \DD^4\cong \widetilde{M}_1.
\]

We are thus left to produce a smooth family $\mathfrak{F}$ of bundle-like metrics connecting $g_0$ to $g_1$. Notice that it is
enough to produce a piecewise smooth family. The construction will proceed along the following steps:
\begin{enumerate}
\item \label{S:fol-diff}		Find an orientation-preserving foliated diffeomorphism $\ol{F}\in \Diff(\sphere^3,\fol')$ such that $\ol{F}^*g_0$ and $g_1$ have the same horizontal spaces near the singular leaves.
\item \label{S:F-isot-id}	Prove that $\ol{F}$ is foliated isotopic to the identity. If $\ol{F}_t$ is such an isotopy, then $\ol{F}^*_t g_0$ is a one parameter familty $\mathfrak{F}_1$ between $g_0$ and $\ol{F}^*g_0$.
\item \label{S:same-h-sp}	Find a one parameter family $\mathfrak{F}_2$ between $\ol{F}^*g_0$ to a metric $g'$ that has the same horizontal
    spaces as $g_1$.
\item \label{S:final-isot}	Find a one parameter family $\mathfrak{F}_3$ between $g'$ and $g_1$.
\end{enumerate}
The one parameter family we need is obtained by composing $\mathfrak{F}_1*\mathfrak{F}_2*\mathfrak{F}_3$. We prove each step in a separate lemma.

\begin{lem}[Step \ref{S:fol-diff}]\label{L:fol-diff}
There exist orientation-preserving foliated diffeomorphisms $F,G\in \Diff(\DD^2\times \sphere^1)$ with $F|_{\sphere^1}=id=G|_{\sphere^1}$, which glue to a diffeomorphism $\ol{F}:=(F,G)\in \Diff(\sphere^3,\fol')$ such that $\ol{F}^*g_0$ and $g_1$ have the same horizontal spaces around the singular leaves.
\end{lem}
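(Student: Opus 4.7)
The plan is to construct $F$ and $G$ independently on the two solid tori that make up $\sphere^3$ using the normal exponential maps of $g_0$ and $g_1$ along each singular (core) leaf, and then taper them to the identity in a collar of the common boundary torus. Since both $F$ and $G$ will equal the identity near $T^2$, they will glue along $\phi_0$ to a well-defined global foliated self-diffeomorphism $\ol{F} \in \Diff(\sphere^3,\fol')$.

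Fix one of the singular leaves $C \cong \sphere^1$, namely the core $\{0\}\times\sphere^1$ of one of the two solid tori. For each $i = 0,1$ the normal exponential map $\exp^\perp_{g_i} \colon \nu_{g_i}C \supset U_i \to \sphere^3$ will be a foliated diffeomorphism onto a tubular neighborhood of $C$: the bundle-like property together with equifocality \cite[Thm.~1.5]{AT08} guarantees that horizontal geodesics starting on $C$ remain horizontal, and that points at equal distance along geodesics emanating perpendicularly from the same $p \in C$ lie on the same leaf of $\fol'$. Consequently $\exp^\perp_{g_i}$ sends each linear fiber of $\nu_{g_i}C$ to a 2-disk transverse to $C$ whose radial lines coincide with the 1-dimensional horizontal distribution of $g_i$ at nearby regular points. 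Choose any smooth linear bundle isomorphism $\psi \colon \nu_{g_0}C \to \nu_{g_1}C$ covering $\mathrm{id}_C$ --- for instance through the canonical identification of both normal bundles with the quotient $T\sphere^3|_C/TC$ --- and set
\[
F_0 := \exp^\perp_{g_1} \circ \psi \circ (\exp^\perp_{g_0})^{-1}.
\]
Then $F_0$ is a foliated diffeomorphism between two neighborhoods of $C$, restricts to the identity on $C$, and, because $\psi$ carries linear fibers to linear fibers, it takes the horizontal distribution of $g_0$ to that of $g_1$. Setting $F := F_0^{-1}$, the pullback $F^* g_0$ and $g_1$ share horizontal spaces throughout a neighborhood of $C$.

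The next step is to extend $F$ from a neighborhood of $C$ to a foliated self-diffeomorphism of the whole solid torus $\DD^2\times\sphere^1$ that equals the identity in a collar of the outer boundary $T^2$. In foliated coordinates $(r,\theta,\phi)$, with leaves $\{r=\mathrm{const}\}$ and core $\{r=0\}$, the map $F$ has the form $(r,\theta,\phi)\mapsto(\rho(r),\alpha(r,\theta,\phi),\beta(r,\theta,\phi))$ with $\rho(0)=0$ and $(\alpha,\beta)|_{r=0}=(\theta,\phi)$. Pick a smooth cutoff $\chi(r)$ equal to $1$ near $r=0$ and to $0$ for $r\geq r_0<1$, and replace $\rho(r)$, $\alpha-\theta$, $\beta-\phi$ by their $\chi$-weighted interpolations with the identity data. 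For $\chi$ decreasing slowly enough the tapered map has positive Jacobian, remains foliated (each leaf $\{r=\mathrm{const}\}$ is preserved), agrees with $F$ near $C$, and equals the identity for $r\geq r_0$, so it extends by the identity across $T^2$. Performing the analogous construction on the other solid torus yields $G$.

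Both $F$ and $G$ fix their respective cores and are the identity in collars of the common boundary torus, so $\ol{F}:=(F,G)$ assembles into a well-defined foliated self-diffeomorphism of $\sphere^3$; the cutoff interpolation supplies a smooth path connecting each of $F,G$ to the identity, hence all three are orientation-preserving. The principal obstacle is the extension step: one must verify that the cutoff interpolation preserves both invertibility and the foliated condition, which reduces to a routine coordinate estimate exploiting that $F_0$ is already $C^1$-close to the identity near $C$ (since $\psi$ restricts to $\mathrm{id}_C$).
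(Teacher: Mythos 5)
Your overall strategy matches the paper's: build the local model $F_0 = \exp^\perp_{g_1} \circ \psi \circ (\exp^\perp_{g_0})^{-1}$ near the core $\sphere^1$, which sends $g_0$-radial geodesics to $g_1$-radial geodesics, and then taper to the identity before reaching the boundary torus so that the two solid-torus pieces glue. (You also correctly pass to $F_0^{-1}$ to get the right direction of the horizontal-space identity, which is a point the paper's text treats somewhat loosely.)

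Where you diverge from the paper, and where a genuine gap appears, is in the tapering. The paper does \emph{not} interpolate $F_0$ with the identity coordinate-by-coordinate. It instead builds a one-parameter family of foliated diffeomorphisms $f_\lambda = (h^1_\lambda)^{-1}\circ h^0_\lambda$ out of the two homothetic transformations $h^i_\lambda$ associated to $g_0,g_1$. Every $f_\lambda$ is automatically a foliated diffeomorphism (a composition of two such), with $f_1 = id$ and $f_\lambda \to F_0$ as $\lambda\to 0$. Composing $f$ with the embedding $\iota(p) = (p,\varphi(r(p)))$ for a slowly varying cutoff $\varphi$ then moves only along the isotopy parameter $\lambda$, so the construction never leaves the space of foliated diffeomorphisms — no estimate on $\|DF_0 - id\|$ is needed.

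Your coordinate-wise linear interpolation $\chi F_0 + (1-\chi)\,id$ does not have this feature: a straight segment in the mapping space joining two diffeomorphisms can pass through maps whose differential degenerates, and that is exactly the possibility you need to rule out on the interpolation annulus $\{r_1\le r\le r_0\}$. You discharge this by asserting that $F_0$ is $C^1$-close to the identity near $C$ ``since $\psi$ restricts to $id_C$,'' but this is not correct: $\psi$ covers the identity on $C$ yet is generically nontrivial on the normal fibers. In fact $DF_0$ along $C$ has the shape $id + a\circ\pi$, where $\pi$ is the projection onto $\nu_{g_0}C$ and $a:\nu_{g_0}C\to T C$ records the angle between the two $g_i$-orthogonal complements of $TC$ inside $T\sphere^3|_C$; this is unipotent but need not be $C^1$-small, and away from $C$ you have no control at all on $\|DF_0-id\|$. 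So the ``routine coordinate estimate'' is not routine, and the interpolation as written may fail to be a diffeomorphism. Replacing it with the homothetic-transformation isotopy (or with any isotopy through foliated diffeomorphisms, which must be produced, not just asserted) is the content you are missing.
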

\begin{proof}
Let $r:\DD^2\times \sphere^1\to [0,1]$ denote the radial function on $\DD^2\times \sphere^1$. Consider the \emph{homothetic transformation} with respect to $g_1$ (cf. \cite{Mol88}):
\begin{equation*}\label{E:homothetic-transf}
h^1_{\lambda}:\DD^2\times \sphere^1\to \DD^2\times \sphere^1,\qquad \lambda\in (0,1],
\end{equation*}
defined in such a way that if $q=\exp_p(x)$ for some $p\in \sphere^1$ and some $g_1$-horizontal vector $x$, then $h^1_{\lambda}(q)=\exp_p(\lambda x)$.
In the same way, define the homothetic transformation $h^0_{\lambda}$ with respect to $g_0$. Notice that for each $\lambda\in (0,1]$,
$f_{\lambda}=(h^1_{\lambda})^{-1}\circ h^0_{\lambda}$ is a foliated diffeomorphism of $\DD^2\times \sphere^1$ that restricts to the identity on
$\sphere^1$, $f_1=id$, and $f_{\lambda}$ converges smoothly to $\exp^{\perp}_{g_1}\circ(\exp^\perp_{g_0})^{-1}$ as $\lambda\to 0$, where
$\exp_{g_1}^\perp,\exp_{g_0}^\perp$ denote the normal exponential maps of $\{0\}\times\sphere^1\In\DD^2\times\sphere^1$, and the normal bundles of the singular leaf $L^+$ are identified via $\nu L^+\simeq T\sphere^3/T\sphere^1$. In particular, if we define
$f_0=\exp^{\perp}_{g_1}\circ(\exp^\perp_{g_0})^{-1}$, then $f_0|_{\sphere^1}=id$ and $f_0$ takes $g_0$-horizontal geodesics to $g_1$-horizontal geodesics (and thus $f_0^*g_0$ and $g_1$ have the same horizontal spaces). Let
\begin{align*}
f:\DD^2\times \sphere^1\times [0,1]&\lra \DD^2\times \sphere^1\\
(p,t)&\lmt f_t(p )
\end{align*}
Now consider a function $\varphi:[0,1]\to [0,1]$ such that $\varphi(t)=0$ in $[0,\epsilon)$, and $\varphi(t)=1$ in $(1-\epsilon,1]$, and define an
embedding
\begin{align*}
\iota:\DD^2\times \sphere^1&\lra \DD^2\times \sphere^1\times [0,1]\\
p&\lmt \big(p,\varphi(r(p ))\big)
\end{align*}
The composition $F:=f\circ\iota$ gives an orientation-preserving foliated diffeomorphism of $\DD^2\times \sphere^1$ that coincides with $f_0$ in
a neighborhood of $\sphere^1$, and is the identity next to the boundary.
If we denote by $G$ the same map on the other copy of $\DD^2\times \sphere^1$, we can glue the diffeomorphisms by $\phi_0$ in (\ref{E:decomposition}) and obtain the desired
 $\ol{F}=(F,G)\in \Diff(\sphere^3,\fol')$.
\end{proof}

\begin{lem}[Step \ref{S:F-isot-id}]\label{L:step ii}
The map $\ol{F}$ in Lemma \ref{L:fol-diff} is foliated isotopic to the identity.
\end{lem}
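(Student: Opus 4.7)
The plan is to produce the isotopy directly, by exploiting the fact that the construction of $\ol{F}$ in the proof of Lemma \ref{L:fol-diff} already comes with a built-in one-parameter family of foliated diffeomorphisms. Recall that $F = f \circ \iota$, where $f_t = (h^1_t)^{-1}\circ h^0_t$ is a smooth family of foliated self-diffeomorphisms of $\DD^2\times\sphere^1$ with $f_1 = id$ and $f_t|_{\sphere^1} = id$ for every $t \in [0,1]$, and $\iota(p) = (p,\varphi(r(p)))$ inserts the cutoff $\varphi$ that vanishes near $r=0$ and equals $1$ near $r=1$. The idea is simply to interpolate the value of the parameter from $\varphi(r(p))$ up to the constant value $1$.

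Concretely, I would set
$$F_s(p) \;:=\; f_{\,1 - s + s\,\varphi(r(p))\,}(p), \qquad s \in [0,1].$$
Then $F_0 = f_1 = id$ and $F_1 = f_{\varphi(r(\cdot))} = F$. Each $F_s$ is a foliated self-diffeomorphism of $\DD^2\times\sphere^1$, being the composition of the foliated maps $f_t$ with a smooth reparametrization in the leaf-space direction; it fixes the core circle pointwise because every $f_t$ does; and in the collar of the boundary torus where $\varphi(r(p)) \equiv 1$, the parameter becomes $1$, so $F_s \equiv f_1 = id$ there for every $s$. Constructing the analogous isotopy $G_s$ on the other solid torus by the same recipe, the two families are the identity in a collar of the common boundary $T^2$ for every $s \in [0,1]$, hence they glue along $\phi_0$ (see \eqref{E:decomposition}) into a smooth foliated isotopy $\ol{F}_s$ of $(\sphere^3,\fol')$ with $\ol{F}_0 = id$ and $\ol{F}_1 = \ol{F}$.

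I do not expect an essential obstacle; the argument is essentially a verification that the construction of Lemma \ref{L:fol-diff} is already homotopic to the identity through its own parameter. The only delicate point is a domain check: since $f_0 = \exp^{\perp}_{g_1}\circ(\exp^{\perp}_{g_0})^{-1}$ is defined only in a tubular neighborhood of $\sphere^1$, one must verify that the parameter $1 - s + s\,\varphi(r(p))$ stays in the range where $f_t$ exists as a global self-diffeomorphism of $\DD^2\times\sphere^1$. But the equation $1 - s + s\,\varphi(r(p)) = 0$ forces simultaneously $s = 1$ and $\varphi(r(p)) = 0$, which by the choice of $\varphi$ confines $p$ to a small neighborhood of the core circle, precisely where $f_0$ is defined; for all other $(s,p)$ the parameter is strictly positive and $f_t$ is a globally defined foliated diffeomorphism. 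This confirms that $\{F_s\}$, and hence $\{\ol{F}_s\}$, is a well-defined smooth foliated isotopy.
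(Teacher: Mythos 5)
Your approach is genuinely different from the paper's, and at first sight more economical: you exploit the explicit homothetic family $f_\lambda$ coming from Lemma~\ref{L:fol-diff} and interpolate the cutoff $\varphi$ linearly to the constant $1$, whereas the paper proceeds indirectly. The paper's proof has three stages: (i) isotope $\ol{F}$ to the identity on small tubular neighborhoods $U^\pm$ of the singular circles, using uniqueness of tubular neighborhoods (Hirsch, Ch.~4, Thms.~5.3, 6.5) and the isotopy extension theorem; (ii) after this normalization, view $\ol{F}$ as a loop $\gamma\colon t\mapsto \ol{F}|_{L_t}$ in $\Diff_0(T^2)$ based at the identity, and retract it into the subgroup $\iota(T)\cong T^2$ of translations using the deformation retraction $\Diff_0(T^2)\simeq T^2$; (iii) observe that a loop in $\iota(T)$ may represent a nontrivial class in $\pi_1(\Diff_0(T^2))\cong\ZZ^2$, and kill this obstruction by freeing the endpoint of the path near one singular leaf (giving up on fixing $U^+$). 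The paper's argument therefore applies to any foliated diffeomorphism with the right boundary behavior and explicitly confronts a potential $\pi_1(\Diff_0(T^2))$ obstruction.

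The gap in your write-up is that you never verify that $F_s$ is a \emph{diffeomorphism} for $0<s<1$; the ``domain check'' you carry out addresses only whether the expression $f_{\lambda}(p)$ with $\lambda=1-s+s\varphi(r(p))$ is defined, not whether the resulting map is injective. The sentence ``being the composition of the foliated maps $f_t$ with a smooth reparametrization in the leaf-space direction'' does not establish this: $F_s$ is the composition $f\circ\iota_s$, where $f(p,t)=f_t(p)$ is the evaluation map (which is not a diffeomorphism) and $\iota_s(p)=(p,\,1-s+s\varphi(r(p)))$ is an embedding, and a composition of an embedding with a non-invertible map need not be a diffeomorphism. Concretely, $F_s$ restricted to a regular leaf $T^2_r$ is a diffeomorphism onto the leaf $f_{\lambda_s(r)}(T^2_r)$, so the issue is whether the induced map on the leaf space $[0,1]$, $r\mapsto \hat\rho(\lambda_s(r),r)$ (where $\hat\rho(\lambda,r)$ is the radius of $f_\lambda(T^2_r)$), is a diffeomorphism. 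Its derivative is $\partial_\lambda\hat\rho\cdot s\,\varphi'(r)+\partial_r\hat\rho$; the second term is positive, but the first need not be small in absolute value, and although the coefficient $s\varphi'(r)$ is no larger than $\varphi'(r)$, the partials are evaluated at the moving point $(\lambda_s(r),r)$ rather than at $(\varphi(r),r)$, so the positivity established for $F_1=F$ does not transfer for free. This can likely be repaired by choosing $\varphi$ with sufficiently small derivative (equivalently, a wide transition band), a choice that is also implicit in the assertion of Lemma~\ref{L:fol-diff} that $F$ itself is a diffeomorphism, but you should make this explicit rather than treat it as automatic. The paper's more roundabout argument is precisely engineered to sidestep this kind of pointwise estimate and any latent mapping-class obstruction in $\Diff_0(T^2)$.
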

\begin{proof}
Consider tubular neighborhoods $U^{\pm}$ of small radius $\epsilon$ around the singular leaves $L^\pm$.
First, noticing that $\ol{F}=id$ at the singular leaves, we can regard $\ol{F}|_{U^{\pm}}$ alternatively as a new tubular neighborhood map $\ol{F}|_{U^{\pm}}: \nu L^{\pm}\rightarrow \sphere^3$, where we have identified $U^{\pm}$ and the normal bundles $\nu L^{\pm}$. Since tubular neighborhood maps are foliated isotopic to each other (cf. Theorems 5.3 and 6.5 in Chapter 4 in \cite{Hi76}), $\ol{F}|_{U^{\pm}}$ is foliated isotopic to the identity inclusion $id:\nu L^{\pm}\rightarrow \sphere^3$. Then by the isotopy extension theorem (Theorem 1.3 in Chapter 8 in \cite{Hi76}), $\ol{F}$ is foliated isotopic to a diffeomorphism that restricts to the identity on $U^{\pm}$.

Suppose now that $\ol{F}$ fixes $U^+\cup U^-$. Let $r=\dist_{L^+}$, where the distance is taken with respect to either metric, and let $L_t=r^{-1}(t)$. We can assume, up to rescaling the metric, that the two singular leaves correspond to $L^+=r^{-1}(0)$ and $L^-=r^{-1}(1)$. Set $T=L_{\epsilon}$. Using for example the normal holonomy with respect to $g_0$, we can identify any $L_r$, $r\in [\epsilon, 1-\epsilon]$ with $T$. Since $\ol{F}$ sends any leaf $L_r$ to itself, and it fixes every leaf $L_{t}$ with $t\in [0,\epsilon]\cup[1-\epsilon,1]$, we can think of $\ol{F}$ as a closed loop $\gamma:t\in [\epsilon,1-\epsilon]\mapsto \ol{F}|_{L_t}\in \Diff(L_t)=\Diff(T)$ based at the identity. In particular, the path lies in the identity component $\Diff_0(T)$. Let $(\DD^2\times \sphere^1,T)$ denote the homogeneous foliation where $T=T^2$ (seen as a Lie group) acts linearly on $\DD^2\times \sphere^1$. By fixing a foliated diffeomorphism $U^+\simeq \DD^2\times \sphere^1$, we obtain an action of $T$ on each $L_t$, $t\leq \epsilon$, and in particular on $T$ itself. We thus obtain a map $\iota:T\to \Diff_0(T)$ that is known to be a deformation retract of $\Diff_0(T)$. In particular, we can homotope the loop $\gamma(t)$ to a loop in $\iota(T)$, and this gives an isotopy between $\ol{F}$ and a diffeomorphism $\hat{F}$ such that $\hat{F}|_{L_t}\in \iota(T)$ for all $t$.

The map $\hat{\gamma}:t\in [\epsilon,1-\epsilon]\mapsto \hat{F}|_{L_t}\in \iota(T)$ is a loop based at the identity. However, since every diffeomorphism in $\iota(T)$ can be completed to a diffeomorphism of $U^+$ - and this completion can be made canonical using the foliated diffeomorphism $U^+\simeq \DD^2\times \sphere^1$- we can homotope the curve $\hat{\gamma}$ by only fixing the end point, and letting the initial point free to move within $\iota(T)$. In this way we obtain an isotopy through foliated diffeomorphisms that still fix $U^-$ but may not in general fix $U^+$. It is clear then, that by allowing such freedom in $\hat{\gamma}$ we can homotope it to the constant map $t\mapsto id\in \Diff_0(T)$, which corresponds to isotoping $\hat{F}$ (and hence $\ol{F}$) to the identity.
\end{proof}

\begin{lem}[Step \ref{S:same-h-sp}]
If $g, \tilde{g}$ are two bundle-like metrics of $(\sphere^3,\fol')$ with the same horizontal spaces near the singular leaves, there is a one-parameter
family of bundle-like metrics $g_t$ from $\tilde{g}$ to a metric $g'$ with the same horizontal spaces as $g$.
\end{lem}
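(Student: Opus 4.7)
The plan is to construct $g_t$ by fixing the leaf metric and the transverse (leaf-space) metric — both inherited from $\tilde g$ — and deforming only the horizontal distribution from $\tilde\H$ to $\H$ along the natural affine structure on the set of transverse distributions. The underlying observation is that any bundle-like metric on $(\sphere^3,\fol')$ is determined by a triple $(\H^*, \sigma^*, \tau^*)$, where $\H^*$ is a distribution complementary to the vertical distribution $\V$, $\sigma^*$ is a metric on $\V$, and $\tau^*$ is a basic symmetric $2$-tensor on $T\sphere^3$ with kernel $\V$ (equivalently, the pullback of a metric on the leaf space $\sphere^3/\fol'$). Convex combinations of $\sigma^*$ and $\tau^*$ are again valid leaf/transverse metrics, and the transverse distributions form an affine bundle, so a linear interpolation in all three slots will make sense.

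Let $\H$ and $\tilde\H$ be the horizontal distributions of $g$ and $\tilde g$ on the regular part of $\sphere^3$. Since both are $1$-dimensional complements to $\V$, there is a unique bundle map $A : \H \to \V$ with $\tilde\H = \{h + A(h) : h \in \H\}$, and the hypothesis that $\H = \tilde\H$ near the singular leaves $L^{\pm}$ forces $A$ to vanish identically in a neighborhood of $L^+ \cup L^-$. Setting
\[
\H_t := \{h + (1-t)A(h) : h \in \H\}, \qquad t \in [0,1],
\]
will produce a smooth family of transverse distributions with $\H_0 = \tilde\H$, $\H_1 = \H$, and $\H_t$ identically equal to both near the singular leaves. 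Letting $P_t : T\sphere^3 \to \V$ be the projection along $\H_t$, and $\tilde\tau$ the basic tensor obtained by pulling back the leaf-space metric of $\tilde g$, I will then define the family of candidate metrics by
\[
g_t(X, Y) := \tilde g|_\V(P_t X, P_t Y) + \tilde\tau(X, Y).
\]

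The crucial point is that $\tilde\tau$ is a basic tensor on $T\sphere^3$ whose definition does not involve any choice of horizontal distribution, so $g_t|_{\H_t} = \tilde\tau|_{\H_t}$ is automatically basic and $g_t$ is bundle-like for $\fol'$. A direct check should show $\V \perp_{g_t} \H_t$, $g_0 = \tilde g$, and $g' := g_1$ has horizontal distribution $\H$, as required; positive-definiteness will be immediate since the two summands are nonnegative on any vector and simultaneously zero only at the origin. The one potentially delicate step is smoothness across the singular leaves, where $\V$ and $\H$ do not extend as smooth distributions of constant rank — but this obstacle is actually vacuous here, since $A \equiv 0$ on a neighborhood of $L^{\pm}$ gives $g_t = \tilde g$ there and the smooth extension is inherited directly from $\tilde g$.
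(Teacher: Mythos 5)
Your construction is correct and essentially the same as the paper's: both interpolate the horizontal distribution while freezing it wherever the two distributions already agree (hence near the singular leaves), both rebuild the metric so that $\H_t$ becomes the orthogonal complement of $\V$ while the transverse metric stays equal to $\tilde g^T$ (which gives bundle-likeness by \cite{Al10}), and both handle smoothness across the singular leaves by the same observation that the new metric coincides with $\tilde g$ there. The only cosmetic difference is the choice of the ``vertical'' summand — you use the $\V$-component along $\H_t$ together with the basic transverse tensor $\tilde\tau$, whereas the paper uses $\tilde g$-orthogonal projections onto $\Delta_t^{\perp}$ and $\tilde\Delta$ — but this does not change the argument.
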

\begin{proof}
Denote by $\sphere^3_{reg}$ the complement in $\sphere^3$ of the singular leaves. There are two horizontal distributions
$\Delta$, $\tilde{\Delta}$ given by the $g$- and $\tilde{g}$-orthogonal spaces to $\fol'$, which are both of dimension one in $\sphere^3_{reg}$. Notice moreover that, by assumption, $\Delta=\tilde{\Delta}$ in a
neighbourhood of the singular leaves. Choose a variation of distributions $\Delta_t$ from $\tilde{\Delta}$ to $\Delta$, in such a way that $\Delta_t$ is
always transverse to $\fol$, and $\Delta_t(p)$ is constant wherever $\Delta(p)=\tilde{\Delta}(p)$. For each point $p\in\sphere^3$, define
$\Delta_t^{\perp}$ the $\tilde{g}$-orthogonal distribution to $\Delta_t$, and the $\tilde{g}$-orthogonal projections $\pi_t:T_p\sphere^3\to
\Delta_t^{\perp}$, $\pi_h:T_p\sphere^3\to \tilde{\Delta}$. Finally, define
\[
h_t(x,y)= \tilde{g}(\pi_tx, \pi_ty)+\tilde{g}(\pi_hx,\pi_hy)
\]
Notice that $h_t$ is a metric, and it satisfies the following properties:
\begin{itemize}
\item If $\Delta_t$ varies smoothly, $h_t$ varies smoothly since it is defined in terms of functions that depend smoothly on $\Delta_t$.
\item The $h_t$-orthogonal space to $\fol'$ is $\Delta_t$. In fact, if $v\in T_pL_p$ and $x\in\Delta_t$, then $\pi_t(x)=0$, $\pi_h(v)=0$ and thus
    $h_t(v,x)=0$.
\item Wherever $\Delta_t(p)=\tilde{\Delta}(p)$, we also have $h_t(p)=\tilde{g}(p)$.
\item The transverse metric $h_t^T$ equals the transverse metric $\tilde{g}^T$.
\end{itemize}
In particular, $h_t$ is a bundle-like metric (cf. \cite{Al10}), and thus defining $g'=h_1$ completes the proof of the lemma.
\end{proof}

\begin{lem}[Step \ref{S:final-isot}]
Let $(M,\fol)$ be any singular Riemannian foliation, and let $g,g'$ be two bundle-like metrics with the same horizontal distribution. Then for
any $t$, the metric $g_t=tg+(1-t)g'$ is a bundle-like metric.
\end{lem}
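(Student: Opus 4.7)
The plan is to reduce the bundle-like property for $g_t$ to the corresponding property of $g$ and $g'$ by exploiting linearity, once we confirm that $g_t$ really induces the same transverse geometry as $g$ and $g'$.

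First I would check that the horizontal distribution is preserved. Write $\H = (T\fol)^{\perp g} = (T\fol)^{\perp g'}$ for the common horizontal distribution. For any $V\in T_p\fol$ and any $h\in\H_p$, linearity gives
\[
g_t(V,h) = t\,g(V,h) + (1-t)\,g'(V,h) = 0.
\]
Since $\H$ has codimension equal to $\dim\fol$, it is exactly the $g_t$-orthogonal complement of $T\fol$. Moreover, $g_t|_\H = t\,g|_\H + (1-t)\,g'|_\H$ is a positive convex combination of two inner products, hence positive definite, so $g_t$ is a genuine Riemannian metric with horizontal distribution $\H$.

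Next I would invoke the characterization of bundle-like metrics used already in Step~\ref{S:same-h-sp} (cf.~\cite{Al10}): a Riemannian metric $h$ on $M$ is bundle-like for $\fol$ if and only if its transverse metric $h^T$ on $\nu\fol\cong\H$ is holonomy invariant (equivalently, basic/projectable under the foliated vector fields). Under the identification $\nu\fol\cong\H$, the transverse metric of $g_t$ equals
\[
g_t^T \;=\; t\,g^T + (1-t)\,(g')^T.
\]
Holonomy invariance is a linear condition on symmetric $2$-tensors on $\nu\fol$: for any foliated section $X$ of $\nu\fol$ and any vertical vector field $V$, the condition $V\cdot g^T(X,X)=0$ is preserved under convex combinations. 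Since both $g^T$ and $(g')^T$ satisfy this condition by hypothesis, so does $g_t^T$. Therefore $g_t$ is bundle-like.

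The only delicate point is the behavior along singular leaves, where the transverse metric needs to be interpreted carefully; however this is handled by first applying the argument on the open and dense regular stratum $M_0$, and then extending the bundle-like property across the singular strata by continuity, using that $g_t$ depends smoothly on $t$ and that the singular Riemannian foliation property is closed. Alternatively, one can check directly in the slice-model around a singular leaf (via the infinitesimal foliation) that the convex combination preserves the model structure, since again the argument is local and linear in the metric.
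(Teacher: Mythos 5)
Your core argument---that the horizontal distribution is preserved, that $g_t^T = t\,g^T + (1-t)\,g'^T$, and that the Lie-derivative (holonomy-invariance) characterization of bundle-like metrics is linear in the transverse metric---coincides with the paper's proof. Where you diverge is in the treatment of the singular strata. The paper does \emph{not} argue only on the regular stratum and then extrapolate by continuity. Rather, it observes that on every stratum $\Sigma$ (singular ones included), the restriction $\fol|_\Sigma$ is a \emph{regular} Riemannian foliation, so the Lie-derivative computation $L_X g_t^T = t\,L_X g^T + (1-t)\,L_X g'^T = 0$ goes through verbatim on each stratum; it then cites the criterion from Alexandrino's paper \cite{Al10} that being bundle-like on each stratum already implies being bundle-like for the singular foliation on all of $M$. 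Your ``extend by continuity, since the singular Riemannian foliation property is closed'' step is the weak point: the transnormality/bundle-like condition is not a priori closed under passing from a dense open set to its closure, and asserting this without proof is a genuine gap. Your second fallback (checking the slice model via the infinitesimal foliation) is closer in spirit to what a rigorous argument would do, but as stated it is too vague. The cleanest fix is exactly the paper's: run your linear computation stratum by stratum rather than only on $M_0$, then invoke \cite{Al10}.
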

\begin{proof}
Of course $g_t$ is a metric, and the $g_t$-horizontal distribution is the same as the $g$- and $g'$- horizontal distributions. In particular,
$g_t^T=t\cdot g^T+(1-t)\cdot g'^T$. On each stratum $\Sigma$, we can take a vertical vector field $X\in \mathfrak{X}(\fol)$, and since $g,g'$ are
bundle-like metrics of $(\Sigma,\fol|_{\Sigma})$, we have $L_Xg^T=L_Xg'^T=0$. Therefore
\[
L_Xg_t^T=t\cdot L_Xg^T+(1-t)\cdot L_Xg'^T=0
\]
and therefore $g_t$ is a bundle-like metric on each stratum. By \cite{Al10}, this is enough to ensure that $g_t$ is a bundle-like metric
for $\fol$.
\end{proof}


\begin{ack}
The authors are very grateful to Alexander Lytchak for kindly introducing each other, very nice suggestions and useful comments.
The authors also thank Zizhou Tang, Gudlaugur Thorbergsson, Burkhard Wilking and Wolfgang Ziller for their support and valuable discussion.
Many thanks also to Marcos M. Alexandrino and Fernando Galaz-Garcia for their interest and helpful conversation. The first author would like to thank the Alexander von Humboldt Foundation and the University of Cologne for their support and hospitality during his Humboldt postdoctoral position in 2012--2014, under the supervision of professor Gudlaugur Thorbergsson.
\end{ack}


\end{document}